\newcommand{\cA}{{\mathcal A}}
\newcommand{\myspace}{\qquad\qquad\qquad}
\newtheorem{theorem}{Theorem}[section]
\newtheorem{lemma}[theorem]{Lemma}
\newtheorem{proposition}[theorem]{Proposition}
\numberwithin{equation}{section}
\begin{document}

\title{\textbf{{\Large Spectral analysis and rational decay rates of strong
solutions to a fluid-structure PDE system\thanks{%
The research of G.~Avalos was partially supported by the NSF~Grants
DMS-0908476 and DMS-1211232. The research of F.~Bucci was partially
supported by the Italian MIUR under the PRIN~2009KNZ5FK Project (\emph{%
Metodi di viscosit\`a, geometrici e di controllo per modelli diffusivi
nonlineari}), by the GDRE (Groupement De Recherche Europ\'een) CONEDP (\emph{%
Control of PDEs}), and also by the Universit\`a degli Studi di Firenze under
the Project \emph{Calcolo delle variazioni e teoria del controllo}.} }}}
\author{ George Avalos \\
%EndAName
{\normalsize {University of Nebraska-Lincoln}}\\
{\small {Lincoln NE, U.S.A.}}\\
{\small \texttt{gavalos@math.unl.edu}} \and Francesca Bucci \\
%EndAName
{\small {Universit\`a degli Studi di Firenze}}\\
{\small {Firenze, ITALY}}\\
{\small \texttt{francesca.bucci@unifi.it} }}

\date{}
\maketitle

\begin{abstract}
In this paper, we consider the problem of obtaining rational decay for a
particular time-evolving fluid-structure model, the type of which has been
considered in Chueshov and Ryzhkova (2013). 
In particular, this partial differential equation (PDE) system is composed of 
a three-dimensional Stokes flow which evolves within a three dimensional cavity. 
Moreover, on a (fixed) portion of the cavity wall, $\Omega$ say, a fourth order 
plate equation is invoked so as to describe the displacements along $\Omega$. 
Contact between these respective fluid and structure dynamics is established through 
the boundary interface $\Omega$. 
Our main result of decay is as follows: The PDE solutions of this fluid-structure PDE, corresponding to smooth initial data, decay at the rate of $\mathcal{O}(1/t)$. 
Our method of proof hinges upon the appropriate invocation of a relatively recent resolvent criterion for rational decays for linear $C_{0}$-semigroups.
\end{abstract}

% Top matter

%\date{\empty}

\section{Introduction} 

\subsection{The mathematical model: functional setting, main result}
In this paper we focus on the problem of deriving rational rates of uniform
decay for a fluid-structure partial differential equation (PDE) system; this
model has appeared repeatedly in the literature, in one form or another. 
(See e.g., \cite{chambolle}, \cite{igor}, \cite{avalos-clark}.) 
The composite systems of PDE describes the interactions of a viscous, incompressible fluid 
within a three-dimensional bounded domain ${\mathcal{O}}$ (the cavity) with an elastic 
dynamics along boundary interface $\Omega$. 
More precisely, let the walled cavity within which the fluid evolves be denoted as 
$\mathcal{O}$, a bounded subset of $\mathbb{R}^3$. This bounded set $\mathcal{O}$ will 
have sufficiently smooth boundary $\partial{\mathcal{O}}$, with 
$\partial \mathcal{O}=\overline{\Omega }\cup \overline{S}$, and $\Omega \cap S=\emptyset$. 
In particular, $\partial \mathcal{O}$ has the following specific spatial configuration: 
\begin{equation*}
\Omega \subset \left\{ x=(x_{1,}x_{2},0)\right\} \,,\quad S\subset \left\{
x=(x_{1,}x_{2},x_{3}):x_{3}\leq 0\right\} \,;
\end{equation*}
see, e.g., the picture below.

\smallskip % Picture

\begin{center}
\begin{tikzpicture}[scale=1.5]

\draw [fill=lightgray,lightgray] (.6,.3) rectangle (1.6,2.3);

\draw [fill=lightgray,lightgray] (0,0) rectangle (1,2);

\draw[fill=lightgray,lightgray] (0,0)--(1,0)--(1.6,.3)--(.6,.3);

\draw [thick, fill=red] (0,2)--(1,2)--(1.6,2.3)--(.6,2.3)--(0,2);

\node at (.8,1) {$\mathcal{O}$};

\node at (1.7,1.2) {$S$};

\node at (.8,2.15) {$\Omega$};

\draw  (0,0)--(0,2)--(1,2)--(1,0)--(0,0);

\draw (0,2)--(.6,2.3)--(1.6,2.3)--(1,2);

\draw (1.6,.3)--(1,0);

\draw [dashed] (0,0)--(.6,.3)--(1.6,.3);

\draw (0,0)--(0,2);

\draw (1,0)--(1,2);

\draw [dashed](.6,2.3)--(.6,.3);

\draw (1.6,.3)--(1.6,2.3);

\end{tikzpicture}
\end{center}

In consequence, if $\nu (x)$ denotes the exterior unit normal vector to $\partial \mathcal{O}$, then 
\begin{equation}
\nu |_{\Omega }=[0,0,1]\,.  \label{normal}
\end{equation}
With respect then to this geometry and with ``rotational inertia parameter'' $\rho \geq 0$, 
the PDE model is as follows, in solution variables $w(x,t)$ and 
$u(x,t)=[u^{1}(x,t),u^{2}(x,t),u^{3}(x,t)]$: 
\begin{subequations} \label{e:pde-model}
\begin{align}
& w_{tt}-\rho \Delta w_{tt}+\Delta ^{2}w=p|_{\Omega } & & \text{in}\;\Omega
\times (0,T)  \label{1} \\
& w=\frac{\partial w}{\partial \nu }=0 & & \text{on}\;\partial \Omega
\label{2} \\
& u_{t}-\Delta u+\nabla p=0 & & \text{in}\;\mathcal{O}\times (0,T)  \label{3}
\\
& \mathrm{div}(u)=0 & & \text{in}\;\mathcal{O}\times (0,T)  \label{4} \\
& u=0\;\; & & \text{on }\,S  \label{5-s} \\
& u=[u^{1},u^{2},u^{3}]=[0,0,w_{t}] & & \text{on }\,\Omega \,,
\label{5-omega}
\end{align}%
with initial conditions 
\end{subequations}
\begin{equation}
\lbrack w(0),w_{t}(0),u(0)]=[w_{0},w_{1},u_{0}]\in \mathbf{H}_{\rho }\,.
\label{ic}
\end{equation}%
Here, the space of initial data $\mathbf{H}_{\rho }$ is defined as follows:
Let 
\begin{equation}
\mathcal{H}_{\mathrm{f}}=\Big\{f\in \mathbf{L}^{2}(\mathcal{O}):\mathrm{div}%
(f)=0\,;\;f\cdot \nu |_{S}=0\Big\}\,;  \label{H_f}
\end{equation}%
and 
\begin{equation*}
V_{\rho }=%
\begin{cases}
L^{2}(\Omega )/\mathbb{R} & \text{if }\rho =0 \\[1mm] 
H_{0}^{1}(\Omega )\cap L^{2}(\Omega )/\mathbb{R} & \text{if }\rho >0\,.%
\end{cases}%
\end{equation*}%
Therewith, we then set 
\begin{equation}
\begin{split}
\mathbf{H}_{\rho }& =\Big\{\big[\omega _{0},\omega _{1},f\big]\in \big[%
H_{0}^{2}(\Omega )\cap L^{2}(\Omega )/\mathbb{R}\big]\times V_{\rho }\times 
\mathcal{H}_{\mathrm{f}}\,, \\[1mm]
& \qquad \qquad \qquad \text{with }\,f\cdot \nu |_{\Omega }=[0,0,f^{3}]\cdot
\lbrack 0,0,1]=\omega _{1}\Big\}\,.
\end{split}
\label{energy}
\end{equation}

As thus presented, the fluid PDE component of this fluid-structure dynamics
consists of a three dimensional incompressible Stokes flow which evolves
within the walled cavity $\mathcal{O}$, in solutions variables $u(x,t)$ and $p(x,t)$, 
with $u$ being the fluid velocity and $p$ the pressure contraint (see (\ref{3})-(\ref{4})). 
As for the structural component: on the cavity wall portion $\Omega$ a fourth order plate 
equation of either Kirchhoff ($\rho >0$) or Euler-Bernoulli ($\rho =0$) type is invoked to describe the displacements along $\Omega $; clamped boundary conditions are in place on 
$\partial \Omega$ (see (\ref{1})-(\ref{2})).

In addition, we note that for the fluid PDE component, the no-slip boundary
condition is in play \emph{only} on the wall $S$ of the fluid container (see
(\ref{5-s})). In particular, there is a matching of velocities on $\Omega $,
by way of accomplishing the coupling betweeen the respective fluid and
structure components; see \eqref{5-omega}. 
Moreover, the disparate dynamics are coupled via the Dirichlet boundary trace of the pressure; 
in particular, pressure variable $p$ appears as a forcing term in the $\Omega $-plate
equation (\ref{1}). We should also state that in general, fluid-structure
PDE models with ``fixed boundary interface'' $\Omega$ are physically relevant when
operating under the assumption that these cavity wall displacements are
small relative to the scale of the geometry; see \cite{du-et-al}.

\smallskip
If one performs a simple energy method, which would commence, by multiplying
structural PDE (\ref{1}) by $w_{t}$ and fluid PDE (\ref{3}) by $u$, and
subsequently integrate in time and space, one would find an underlying
dissipation of energy which governs the fluid-structure system. This
dissipation comes solely from the gradient of the fluid component $u$. 
Given this fluid dissipation which propagates onto the entire fluid-structure PDE,
an investigation here of stability properties for this coupled system would
seem to be appropriate.

\medskip
We proceed to write down an abstract realization of the fluid structure PDE 
\eqref{e:pde-model}-\eqref{ic}. To this end, let $A_{D}:{\mathcal{D}}(A_{D})\subset
L^{2}(\Omega )\rightarrow L^{2}(\Omega )$ be given by 
\begin{equation}
A_{D}g=-\Delta g\,,\quad D(A_{D})=H^{2}(\Omega )\cap H_{0}^{1}(\Omega )\,.
\label{dirichlet}
\end{equation}%
If we subsequently make the denotation for all $\rho \geq 0$, 
\begin{equation}
P_{\rho }=I+\rho A_{D}\,,\quad {\mathcal{D}}(P_{\rho })=%
\begin{cases}
L^{2}(\Omega ) & \text{if }\,\rho =0 \\ 
D(A_{D}) & \text{if }\,\rho >0%
\end{cases}%
\;,  \label{P}
\end{equation}%
then the mechanical PDE component \eqref{1}-\eqref{2} can be written as 
\begin{equation*}
P_{\rho }w_{tt}+\Delta ^{2}w=p|_{\Omega }\quad \text{ on}\;(0,T)\,.
\end{equation*}

Using that 
\begin{equation*}
{\mathcal{D}}(P_{\rho }^{1/2})=%
\begin{cases}
L^{2}(\Omega ) & \text{if }\;\rho =0 \\ 
H_{0}^{1}(\Omega ) & \text{if }\;\rho >0%
\end{cases}%
\;,
\end{equation*}%
(see \cite{grisvard}), then we can endow the Hilbert space $\mathbf{H}_{\rho
}$ with norm-inducing inner product 
\begin{equation*}
\big(\lbrack \omega _{0},\omega _{1},f],\big[\tilde{\omega}_{0},\tilde{\omega%
}_{1},\tilde{f}\big]\big)_{\mathbf{H}_{\rho }}=(\Delta \omega _{0},\Delta 
\tilde{\omega}_{0})_{\Omega }+(P_{\rho }^{1/2}\omega _{1},P_{\rho }^{1/2}%
\tilde{\omega}_{1})_{\Omega }+(f,\tilde{f})_{{\mathcal{O}}}\,,
\end{equation*}%
where $(\cdot ,\cdot )_{\Omega }$ and $(\cdot ,\cdot )_{\mathcal{O}}$ are
the $L^{2}$-inner products on their respective geometries.

We note here, as it was in \cite{igor}, the necessity for imposing that wave
initial displacement and velocity each have zero mean average. \newline
To see this: invoking the boundary condition \eqref{5-s}-\eqref{5-omega} and
the fact that the normal vector $\nu$ coincides with $[0,0,1]$ on $\Omega$,
we have then by Green's formula, that for all $t\geq 0$, 
\begin{equation}  \label{zero}
\int_{\Omega} w_t(t)\, dx =\int_{\Omega }u^3(t)\,dx = \int_{\partial{%
\mathcal{O}}} u(t)\cdot \nu\, d\sigma=0\,.
\end{equation}
And so we have necessarily, 
\begin{equation*}
\int_{\Omega} w(t)\, dx =\int_{\Omega} w_{0}\,dx \quad \text{for all }\,
t\ge 0\,.
\end{equation*}
This accounts for the choice of the structural finite energy space
components for $\mathbf{H}_{\rho}$, in \eqref{energy}.

\medskip Well-posedness of the initial/boundary value problem %
\eqref{e:pde-model}-\eqref{ic} has been fully discussed in \cite%
{avalos-clark} for both cases $\rho >0$ and $\rho =0$. The proof of
well-posedness provided therein hinges upon demonstrating the existence of a
modeling semigroup $\big\{e^{{\mathcal{A}}_{\rho }t}\big\}_{t\geq 0}\subset {%
\mathcal{L}}(\mathbf{H}_{\rho })$, for appropriate generator ${\mathcal{A}}:%
\mathbf{H}_{\rho }\rightarrow \mathbf{H}_{\rho }$. Subsequently, by means of
this family, the solution to \eqref{e:pde-model}-\eqref{ic}, for initial
data $[w_{0},w_{1},u_{0}]\in \mathbf{H}_{\rho }$, will then of course be
given via the relation 
\begin{equation}
\left[ 
\begin{array}{c}
w(t) \\ 
w_{t}(t) \\ 
u(t)%
\end{array}%
\right] =e^{\mathcal{A}_{\rho }t}\left[ 
\begin{array}{c}
w_{0} \\ 
w_{1} \\ 
u_{0}%
\end{array}%
\right] \in C([0,T];\mathbf{H}_{\rho })\,.  \label{semi}
\end{equation}%
We recall here that the particular choice here of generator ${\mathcal{A}}%
_{\rho }:\mathbf{H}_{\rho }\rightarrow \mathbf{H}_{\rho }$ is dictated by
the following consideration, whose proof is given for the reader's
convenience. 

\begin{lemma}
If $p(t)$ is a viable pressure variable for \eqref{e:pde-model}-\eqref{ic},
then pointwise in time $p(t)$ necessarily satisfies the following boundary
value problem, for $[w(t),u(t)]$ \textquotedblleft smooth
enough\textquotedblright : 
\begin{equation}
\begin{cases}
\Delta p=0 & \text{in }\;\mathcal{O} \\[1mm] 
\frac{\partial p}{\partial \nu }=\Delta u\cdot \nu \big|_{S} & \text{on }\;S
\\[1mm] 
\frac{\partial p}{\partial \nu }+P_{\rho }^{-1}p=P_{\rho }^{-1}\Delta
^{2}w+\Delta u^{3}\big|_{\Omega } & \text{on }\;\Omega \,.%
\end{cases}
\label{bvp}
\end{equation}
\end{lemma}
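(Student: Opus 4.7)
The proof is essentially a direct computation, reading off the boundary value problem for $p$ from the fluid momentum equation together with the matching conditions and the plate equation. Assuming $[w(t),u(t)]$ is smooth enough for the manipulations below to make sense pointwise, I would proceed in three short steps.

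\textbf{Interior equation.} Applying the divergence operator to \eqref{3} gives $\mathrm{div}(u_{t})-\Delta(\mathrm{div}(u))+\Delta p=0$ in $\mathcal{O}$, and since $\mathrm{div}(u)=0$ by \eqref{4} (whence also $\mathrm{div}(u_{t})=0$), one obtains $\Delta p=0$ in $\mathcal{O}$.

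\textbf{Neumann trace on $S$.} Taking the inner product of \eqref{3} with the outward normal $\nu$ yields
\begin{equation*}
\frac{\partial p}{\partial \nu}=\Delta u\cdot \nu-u_{t}\cdot \nu\qquad \text{on }\partial \mathcal{O}.
\end{equation*}
On $S$ the no-slip condition \eqref{5-s} gives $u\equiv 0$, hence $u_{t}\cdot \nu=0$ on $S$, which produces the second line of \eqref{bvp}.

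\textbf{Boundary relation on $\Omega$.} On $\Omega$ the normal is $\nu=[0,0,1]$ by \eqref{normal}, so $\Delta u\cdot \nu=\Delta u^{3}$. The velocity matching condition \eqref{5-omega} gives $u_{t}\cdot \nu=(u_{t})^{3}=w_{tt}$ on $\Omega$. Substituting into the trace identity,
\begin{equation*}
\frac{\partial p}{\partial \nu}=\Delta u^{3}-w_{tt}\qquad \text{on }\Omega.
\end{equation*}
Now the plate equation \eqref{1} may be rewritten as $P_{\rho}w_{tt}=p|_{\Omega}-\Delta^{2}w$; since $P_{\rho}=I+\rho A_{D}$ (with $A_{D}$ the Dirichlet Laplacian) is boundedly invertible on $L^{2}(\Omega)$ for every $\rho\ge 0$, one has $w_{tt}=P_{\rho}^{-1}p|_{\Omega}-P_{\rho}^{-1}\Delta^{2}w$, and substituting this into the preceding display yields the third line of \eqref{bvp}.

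The only point requiring some care is the inversion of $P_{\rho}$: when $\rho=0$ this is trivial, while when $\rho>0$ one must recognize $w_{tt}$ as the image under $P_{\rho}^{-1}$ of an $L^{2}(\Omega)/\mathbb{R}$ element, which is consistent with the mean-zero constraint in the definition \eqref{energy} of $\mathbf{H}_{\rho}$ and its preservation \eqref{zero} by the dynamics. Beyond this, the derivation is a straightforward reading of the PDE system on the boundary.
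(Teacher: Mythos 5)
Your proof is correct and follows essentially the same route as the paper: take the divergence of the Stokes equation for harmonicity, dot with $\nu$ and restrict to $S$ (using no-slip), and on $\Omega$ combine the trace identity with the plate equation and the matching condition $u^{3}=w_{t}$ to eliminate $w_{tt}$. Your closing caveat is unnecessary, since $P_{\rho}=I+\rho A_{D}$ is positive definite and boundedly invertible on all of $L^{2}(\Omega)$, with no mean-zero consideration required at this step.
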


\begin{proof}
To show that $p$ is harmonic in $\Omega$, we take the divergence of both sides of \eqref{3} 
and use the divergence free condition in (\ref{4}). 
Moreover, dotting both sides of (\ref{3}) with the unit normal vector $\nu $, and then subsequently taking the resulting trace on $S$ will yield the boundary condition in \eqref{bvp} 
that pertains to $S$. 
(Implicitly, we are also using the fact that $u=0$ on $S$.)

Finally, we consider the particular geometry which is in play (where $\nu=(0,0,1)$ on $\Omega$). 
Using the equation (\ref{1}) and the boundary condition \eqref{5-omega}, 
we have on $\Omega$ 
\begin{equation*}
\begin{split}
P_{\rho }^{-1}\Delta ^{2}w & =-w_{tt}+ P_{\rho }^{-1}p\big|_{\Omega } 
\\
\myspace &=-\frac{d}{dt}(0,0,w_{t})\cdot \nu +P_{\rho }^{-1}p\big|_{\Omega } 
\\
& =-\left[ u_{t}\cdot \nu \right]_{\Omega }+ P_{\rho}^{-1}p\big|_{\Omega } 
\\
&=-\left[ \Delta u\cdot \nu \right] _{\Omega }+\frac{\partial p}{\partial \nu }\Big|_{\Omega }
+ P_{\rho }^{-1}p\big|_{\Omega }\,,
\end{split}
\end{equation*}
which gives the boundary condition in \eqref{bvp} 
that pertains to $\Omega$.
\end{proof}

The boundary value problem (BVP) \eqref{bvp} can be solved through the
agency of appropriate harmonic extensions from the boundary of ${\mathcal{O}}
$, that are the ``Robin-Neumann'' maps $R_{\rho}$ and $\tilde{R}_{\rho }$
defined by 
\begin{equation*}
\begin{aligned} R_{\rho }g &= f\Longleftrightarrow \Big\{ \Delta f=0\text{ \
in }\mathcal{O}\,, \; \frac{\partial f}{\partial \nu }+P_{\rho
}^{-1}f=g\text{ \ on } \Omega\,, \; \frac{\partial f}{\partial \nu
}=0\text{\ on }S\Big\}\,; \\ \tilde{R}_{\rho }g &=f\Longleftrightarrow
\Big\{ \Delta f=0\text{ \ in } \mathcal{O}\,, \; \frac{\partial f}{\partial
\nu }+P_{\rho }^{-1}f=0\text{\ on }\Omega\,, \; \frac{\partial f}{\partial
\nu }=g\text{ on } S\Big\}\,. \end{aligned}
\end{equation*}
It is well known that for all real $s$, 
\begin{equation}  \label{Rs}
R_{\rho }\in \mathcal{L}\big(H^{s}(\Omega),H^{s+3/2}(\mathcal{O}) \big)\text{%
; \ }\tilde{R}_{\rho }\in \mathcal{L}\big(H^{s}(S),H^{s+3/2}(\mathcal{O})%
\big)\,;
\end{equation}
see, e.g., \cite{L-M}. (We are also using implicity the fact that $%
P_{\rho}^{-1}$ is positive definite, self-adjoint on $\Omega $.)

\smallskip Therewith, the pressure variable $p(t)$, as necessarily the
solution of \eqref{bvp}, can be written pointwise in time as 
\begin{equation}
p(t)=G_{\rho,1}w(t)+G_{\rho,2}u(t),  \label{p}
\end{equation}
where $G_{\rho,1}$ and $G_{\rho,2}$ % ---$G_1$ and $G_2$, in short---
are defined as follow: 
\begin{align}
& G_{\rho,1}w=R_{\rho }(P_{\rho }^{-1}\Delta ^{2}w)\,;  \label{G1} \\
& G_{\rho,2}u =R_{\rho }(\left. \Delta u^{3}\right\vert _{\Omega })+ \tilde{R%
}_{\rho }(\left. \Delta u\cdot \nu \right\vert_{S})\,.  \label{G2}
\end{align}

\smallskip These relations suggest the following choice for the generator $%
\mathcal{A}_{\rho }:\mathbf{H}_{\rho }\rightarrow \mathbf{H}_{\rho}$. We set 
\begin{equation}  \label{domain}
{\mathcal{A}}_\rho \equiv 
\begin{bmatrix}
0 & I & 0 \\ 
-P_{\rho }^{-1}\Delta ^{2}+P_{\rho }^{-1}G_{\rho ,1}\big|_{\Omega } & 0 & 
P_{\rho }^{-1}G_{\rho ,2}\big|_{\Omega } \\ 
-\nabla G_{\rho ,1} & 0 & \Delta -\nabla G_{\rho ,2}%
\end{bmatrix}%
\end{equation}
with domain 
\begin{equation}  \label{e:domain-of-generator}
\begin{split}
{\mathcal{D}}({\mathcal{A}}_\rho)&=\Big\{ \big[w_1,w_2,u\big] \in \mathbf{H}%
_{\rho }: w_1\in H^{3}(\Omega )\cap H_{0}^{2}(\Omega)\,, \; w_2\in
H_{0}^{2}(\Omega )\,, \; u\in \mathbf{H}^{2}(\mathcal{O})\,; \\[1mm]
& \qquad u=0 \; \text{on}\; S\,, \quad u=(0,0,w_2) \; \text{on} \;\Omega\,,
\\[1mm]
& \qquad \Delta u\cdot \nu\big|\in H^{-1/2}(\Omega)\,, \; \text{and hence}
\; G_{\rho,1}w_1+G_{\rho,2}u\in H^1({\mathcal{O}}) \Big\}\,.
\end{split}%
\end{equation}
assuming $\rho$ is positive, whereas more precisely the first membership in %
\eqref{e:domain-of-generator} is as follows: 
\begin{equation*}
w_1\in {\mathcal{S}}_{\rho }:= 
\begin{cases}
H^{4}(\Omega )\cap H_{0}^{2}(\Omega ) & \rho =0 \\ 
H^{3}(\Omega )\cap H_{0}^{2}(\Omega) & \rho >0\,.%
\end{cases}%
\end{equation*}

\medskip 
Thus, we remind the reader that well-posedness for the dynamics
governed by the operator ${\cA}_\rho$, when $\rho =0$ (i.e., when
the elastic equation is the Euler-Bernoulli one), was originally established
in \cite{igor}, by using Galerkin approximations. 
A novel proof of well-posedness pertaining to both cases $\rho =0$ and $\rho >0$, based upon
the classical Lumer-Phillips Theorem as well as on a clever use of the 
Babu\v{s}ka-Brezzi~Theorem (see, e.g., \cite[p.~116]{kesavan}) has been
recently given in \cite{avalos-clark}. 
The corresponding statement is as follows.

\begin{theorem}[\textbf{Well-posedness \protect\cite{avalos-clark}}] \label{well} 
The operator $\mathcal{A}_{\rho }:\mathbf{H}_{\rho }\rightarrow 
\mathbf{H}_{\rho }$ defined by \eqref{domain}-\eqref{e:domain-of-generator}
generates a $C_{0}$-semigroup of contractions $\left\{e^{\mathcal{A}%
t}\right\}_{t\geq 0}$ on $\mathbf{H}_{\rho }$. Thus, given $%
[w_{0},w_{1},u_{0}]\in \mathbf{H}_{\rho }$, the weak solution $%
[w,w_{t},u]\in C([0,T];\mathbf{H}_{\rho })$ of (\ref{e:pde-model})-(\ref{ic}) is given by (\ref{semi}).
\end{theorem}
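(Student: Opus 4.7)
The plan is to verify the two hypotheses of the Lumer--Phillips Theorem, namely (i) $\mathcal{A}_\rho$ is dissipative on $\mathbf{H}_\rho$, and (ii) $\mathrm{Range}(\lambda I - \mathcal{A}_\rho) = \mathbf{H}_\rho$ for some $\lambda>0$. Since $\mathcal{D}(\mathcal{A}_\rho)$ is dense in $\mathbf{H}_\rho$ (clear from its definition), these suffice for generation of a $C_0$-semigroup of contractions.

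For step (i), I take $[w_1,w_2,u]\in \mathcal{D}(\mathcal{A}_\rho)$ and compute $\mathrm{Re}\,(\mathcal{A}_\rho[w_1,w_2,u],[w_1,w_2,u])_{\mathbf{H}_\rho}$ directly from the inner product. By construction of $\mathcal{A}_\rho$, this recasts itself as the familiar energy identity: integrate $\Delta^2 w_1$ against $w_2$ by parts (using the clamped conditions on $\partial\Omega$), integrate $\Delta u$ against $u$ by parts on $\mathcal{O}$ (using $u|_S=0$ and $u|_\Omega=(0,0,w_2)$), and group the $G_{\rho,1}, G_{\rho,2}$ terms into the pressure trace $p|_\Omega = G_{\rho,1}w_1 + G_{\rho,2}u$ given by \eqref{p}. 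By the harmonicity and boundary conditions defining $R_\rho,\tilde R_\rho$, the coupling terms cancel exactly, leaving $\mathrm{Re}\,(\mathcal{A}_\rho x,x)_{\mathbf{H}_\rho} = -\|\nabla u\|^2_{\mathcal{O}}\le 0$, as anticipated by the formal energy method already described in the excerpt.

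For step (ii), I fix $\lambda>0$ and, given $[f_1,f_2,g]\in \mathbf{H}_\rho$, seek $[w_1,w_2,u]\in\mathcal{D}(\mathcal{A}_\rho)$ solving $(\lambda I-\mathcal{A}_\rho)[w_1,w_2,u]=[f_1,f_2,g]$. The first component yields $w_2=\lambda w_1 - f_1$, reducing matters to a coupled stationary plate--Stokes problem in $(w_1,u,p)$ with kinematic matching $u|_\Omega=(0,0,\lambda w_1 - f_1)$, $u|_S=0$, $\mathrm{div}(u)=0$, and pressure trace matching as in \eqref{bvp}. I pose this as a mixed variational problem on the product space of admissible displacement--velocity pairs $(w_1,u)$ satisfying the interface and no-slip constraints, with the pressure $p$ entering as a Lagrange multiplier for the divergence constraint (modulo the zero-mean condition, which is exactly the constraint already built into $V_\rho$ and $\mathbf{H}_\rho$ via \eqref{zero}). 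The Babu\v{s}ka--Brezzi Theorem then reduces solvability to (a) coercivity of the principal bilinear form on the kernel of the divergence, which follows from $\lambda>0$ together with the plate and viscous terms, and (b) the inf-sup condition between the velocity test space and the pressure space.

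The main obstacle will be the careful construction of the velocity space and the verification of the inf-sup condition in the presence of the nontrivial coupling $u|_\Omega=(0,0,w_2)$. One must lift the boundary datum $(0,0,\lambda w_1-f_1)$ to a divergence-free field in $\mathcal{O}$ (possible precisely because of the zero-mean compatibility $\int_\Omega(\lambda w_1 - f_1)\,d\sigma=0$ inherited from $V_\rho$), and then show that the residual inf-sup on homogeneous data is a standard consequence of the Stokes inf-sup on Lipschitz domains with mixed Dirichlet data. A concluding elliptic regularity argument is needed to promote the variational solution into $\mathcal{D}(\mathcal{A}_\rho)$, thus yielding the pressure identity $p=G_{\rho,1}w_1+G_{\rho,2}u$ and completing the range condition.
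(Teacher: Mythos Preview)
The paper does not actually prove this theorem; it is quoted from \cite{avalos-clark}, and the surrounding text explicitly states that the proof there proceeds ``based upon the classical Lumer-Phillips Theorem as well as on a clever use of the Babu\v{s}ka--Brezzi Theorem.'' Your outline follows precisely that route---dissipativity via the energy identity yielding $\mathrm{Re}\,(\mathcal{A}_\rho x,x)_{\mathbf{H}_\rho}=-\|\nabla u\|_{\mathcal{O}}^2$, and the range condition via a mixed variational formulation with the pressure as Lagrange multiplier---so the approach is essentially the same as the one the paper attributes to \cite{avalos-clark}.
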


\smallskip 
In the present work the long-time behaviour, as $t\rightarrow +\infty $, of the linear dynamics described by \eqref{e:pde-model} is addressed, with focus on the more challenging case $\rho >0$ (the elastic equation is the Kirchhoff one). 
When $\rho =0$, uniform (exponential) stability of finite energy solutions holds true; this issue has been dealt with in \cite{igor}, by using Lyapunov function arguments (in the \emph{time
domain}). 
A different proof of the aforesaid result has been subsequently given in \cite{avalos-bucci-cortona}, with a proof geared rather toward establishing the necessary resolvent estimates in the \emph{frequency domain}. 
The very same {\em `frequency domain perspective'} enables us to infer that in the case 
$\rho >0$, a weaker notion of uniform decay will prevail for the fluid-structure 
PDE \eqref{e:pde-model}-\eqref{ic}. 
In particular, the main result of this paper is the following
stability result pertaining to \emph{strong} solutions, which provides sharp 
\emph{polynomial} rates of decay.

% MAIN RESULT

\begin{theorem}[\textbf{Main result: Rational decay rates}]
\label{t:main} Let the rotational inertia parameter $\rho $ be positive in %
\eqref{1}. Then for initial data $[w_{0},w_{1},u_{0}]\in D(\mathcal{A}_{\rho
})$, the corresponding solution $[w,w_{t},u]\in C([0,T];D(\mathcal{A}_{\rho
}))$ of \eqref{e:pde-model}-\eqref{ic} satisfies the following decay rate
for time $t$ large enough: 
\begin{equation}
\Vert \lbrack w(t),w_{t}(t),u(t)]\Vert _{\mathbf{H}_{\rho }}\leq \frac{C}{t}%
\,\Vert \lbrack w_{0},w_{1},u_{0}]\Vert _{D({\mathcal{A}}_{\rho })}\,.
\label{main_est}
\end{equation}
\end{theorem}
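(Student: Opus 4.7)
The plan is to deduce Theorem \ref{t:main} from the well-known resolvent characterization of polynomial decay for bounded $C_0$-semigroups on a Hilbert space, due to Borichev and Tomilov. Since $\{e^{{\mathcal A}_\rho t}\}_{t\ge0}$ is already a contraction semigroup by Theorem \ref{well}, in order to obtain the $\mathcal{O}(1/t)$ rate of decay on $D({\mathcal A}_\rho)$ it suffices to establish the two frequency-domain statements
\begin{equation}\label{p:BT}
(\mathrm{i})\;\; i{\mathbb R}\subset \rho({\mathcal A}_\rho); \qquad
(\mathrm{ii})\;\; \sup_{|\beta|\ge 1} \frac{1}{|\beta|}\,\big\|(i\beta - {\mathcal A}_\rho)^{-1}\big\|_{{\mathcal L}(\mathbf{H}_\rho)} <\infty.
\end{equation}

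For (i), I would proceed by contradiction: suppose $i\beta\in\sigma({\mathcal A}_\rho)$ for some $\beta\in{\mathbb R}$. Because the embedding $D({\mathcal A}_\rho)\hookrightarrow \mathbf{H}_\rho$ is compact (the structural component is more regular, and the fluid component sits inside $\mathbf{H}^2({\mathcal O})$), the resolvent $(\lambda-{\mathcal A}_\rho)^{-1}$ is compact, so $i\beta$ must actually be an eigenvalue. For an eigenvector $[w,z,u]$ the dissipation identity
\[
0 = \mathrm{Re}\,\big({\mathcal A}_\rho [w,z,u],[w,z,u]\big)_{\mathbf{H}_\rho} = -\|\nabla u\|^2_{{\mathcal O}}
\]
forces $u\equiv 0$ in $\mathcal{O}$ (using $u=0$ on $S$). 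The coupling condition $u|_\Omega=(0,0,z)$ then yields $z\equiv 0$ on $\Omega$, and from $i\beta w = z$ we obtain $w\equiv 0$ for $\beta\neq 0$. The case $\beta=0$ is handled directly by solving the static problem $\mathcal{A}_\rho[w,z,u]=0$ and invoking Green's formula together with the fact that $w_0$ has zero mean. This establishes (i).

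The main work lies in proving the resolvent bound (ii), which should be approached by contradiction following the standard scheme. Assume the estimate fails: then there exist sequences $\beta_n\in\mathbb R$ with $|\beta_n|\to\infty$, and data $[w_n,z_n,u_n]\in D({\mathcal A}_\rho)$ normalized by $\|[w_n,z_n,u_n]\|_{\mathbf{H}_\rho}=1$, such that
\[
(i\beta_n-{\mathcal A}_\rho)[w_n,z_n,u_n] = \frac{1}{|\beta_n|}[f_n,g_n,h_n], \qquad \|[f_n,g_n,h_n]\|_{\mathbf{H}_\rho}\to 0.
\]
Taking the inner product with $[w_n,z_n,u_n]$ and real parts yields $\|\nabla u_n\|_{{\mathcal O}}^2\to 0$, so by Poincaré $u_n\to 0$ in $\mathbf{H}^1(\mathcal{O})$; the trace identity $u_n=(0,0,z_n)$ on $\Omega$ then implies $z_n\to 0$ in $L^2(\Omega)$. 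The crucial, and hardest, step is to convert these pieces of information into convergence of the full norm $\|\Delta w_n\|_\Omega^2 + \|P_\rho^{1/2} z_n\|_\Omega^2 \to 0$, which will contradict the normalization. To do this I would exploit the scalar equation satisfied by $w_n$, namely
\[
-\beta_n^2 P_\rho w_n + \Delta^2 w_n = \big(G_{\rho,1}w_n + G_{\rho,2}u_n\big)\big|_\Omega + \text{lower order}\,,
\]
and employ a carefully chosen multiplier, in the spirit of the companion paper \cite{avalos-bucci-cortona}: test the plate equation against $w_n$ itself and test the Stokes resolvent equation against a divergence-free $H^1$ extension of $(0,0,w_n)|_\Omega$ off $\Omega$ (cut off near $S$). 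The cross terms generated by the pressure trace cancel against the plate forcing term $p|_\Omega$, leaving an identity in which the structural energy is controlled by $\|\nabla u_n\|_{{\mathcal O}}$, right-hand-side data, and factors of $|\beta_n|$. The precise count of powers of $|\beta_n|$ should be consistent with the factor $1/|\beta_n|$ that was pulled out in the contradiction hypothesis, producing the desired contradiction.

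The principal obstacle is precisely this last step. In the Euler-Bernoulli case $\rho=0$ of \cite{avalos-bucci-cortona}, the velocity $w_t$ lives only in $L^2(\Omega)/\mathbb{R}$ and the multiplier identities close up without any loss in $\beta$, giving exponential decay. With $\rho>0$ the Kirchhoff operator $P_\rho=I+\rho A_D$ smooths $w_t$ into $H^1_0(\Omega)$, so the coupling through the trace $u|_\Omega=(0,0,w_t)$ transfers the fluid dissipation to the structure only after paying a factor of $|\beta|$; handling that loss sharply (and not worse) is what gives the $\mathcal{O}(|\beta|)$ resolvent growth and, via Borichev-Tomilov, the announced $\mathcal{O}(1/t)$ rate in \eqref{main_est}.
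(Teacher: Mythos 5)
Your overall framework is the right one, and it matches the paper's: reduce \eqref{main_est} to the Borichev--Tomilov criterion with $\alpha=1$, i.e.\ to $i\mathbb{R}\subset\rho(\mathcal{A}_\rho)$ plus the bound $\|(i\beta-\mathcal{A}_\rho)^{-1}\|=O(|\beta|)$. For the spectral statement your route differs from the paper's: you invoke compactness of the resolvent (via the regularity built into ${\mathcal{D}}(\mathcal{A}_\rho)$ and the estimates coming from the $\lambda=0$ analysis) to reduce everything to the eigenvalue case, which the dissipation identity kills; the paper instead treats point, residual and approximate (continuous) spectrum separately, the last by a sequence argument using Stokes regularity, harmonicity of the pressure and the projection $\mathbb{P}$. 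Your shortcut is legitimate provided you actually justify the compact embedding (closedness of $\mathcal{A}_\rho$ plus the $H^3\times H^2\times\mathbf{H}^2$ membership in \eqref{e:domain-of-generator} makes this a closed-graph argument), so no objection there.

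The genuine gap is in the resolvent bound, which is the heart of the theorem. After the dissipation identity \eqref{dissi} gives $\|\nabla\mu\|_{\mathcal{O}}\to 0$ and the trace coupling \eqref{r8} gives control of $\beta\omega_1$ only in $H^{1/2}(\Omega)$ (estimate \eqref{est2.5}), the whole difficulty is to bound the high-frequency structural term $\beta^2\|P_\rho^{1/2}\omega_1\|^2_{L^2(\Omega)}$ (and with it $\|\Delta\omega_1\|^2$ and, via $\omega_2=i\beta\omega_1-\omega_1^*$, the $H^1$ norm of the velocity, which $L^2$ or $H^{1/2}$ convergence of $z_n$ does not give). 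You defer exactly this step with ``the precise count of powers of $|\beta_n|$ should be consistent,'' but that count is the theorem: in the paper one must interpolate $H^{3/2}(\Omega)$ between $H^2(\Omega)$ and $H^{1/2}(\Omega)$ with $\theta=1/3$ and apply Young's inequality with exponents $3$ and $3/2$ to get $\beta^2\|\nabla\omega_1\|^2\le\epsilon\|\omega_1\|^2_{H^2}+C_\epsilon|\beta|\,\|\beta\omega_1\|^2_{H^{1/2}}$ (estimate \eqref{est4}); the absorbable $\epsilon$-term and the factor $|\beta|^2$ on the data side are precisely what produce the $O(|\beta|)$ resolvent growth and hence the $1/t$ rate. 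Without this (or an equivalent) quantitative mechanism your argument does not close, and casting it as a contradiction does not help: once the rate $O(|\beta|)$ is built into the contradiction hypothesis, deriving the contradiction requires the same sharp inequality. A smaller inaccuracy: in the multiplier step the pressure term $(\pi|_\Omega,\omega_1)_\Omega$ does not cancel against anything; in the paper it is transferred by Green's identities, through the auxiliary Stokes problem \eqref{BVP} with boundary datum $(0,0,\omega_1)$, into interior terms $-i\beta(\mu,\psi)-(\nabla\mu,\nabla\psi)+(\mu^*,\psi)$, which are then estimated using \eqref{dissi}, \eqref{bvp_est} and \eqref{est2.5}. Note also that the paper deliberately produces these frequency-domain estimates explicitly rather than by contradiction, which is what yields the stated bound for all large $|\beta|$ with an explicit constant.
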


In what follows, rotational parameter $\rho$ will be positive always.

\bigskip
\subsection{Background and further remarks}
We should note that in principle, one might attempt to derive the rational
decay estimate (\ref{main_est}) by an analysis in the ``time domain''; the 
associated energy method is in principle abstractly outlined in 
\cite[Theorem~3.2.2, p.~43]{lasiecka}. 
However, the details of proof in the time domain, at least from our vantage point, 
would seem to be quite daunting, if it can be done at all, in the $t$-domain. 
\\
(We note that the time domain approach---along energy methods, combined with interpolation 
techniques---underlines as well the recent work \cite{alabau-et-al}, which provides a novel criterion to derive the decay rates of the solutions to evolutionary PDE systems, whose 
range of application includes certain coupled PDE systems of {\em hyperbolic} type.)

Instead, we choose to operate here in the ``frequency domain'', by invoking an energy 
method with respect to a formally `Laplace transformed' version of the system 
\eqref{e:pde-model}-\eqref{ic}, with an ultimate view of invoking the sharp
resolvent criterion in \cite{borichev} (see a penultimate version of this
resolvent criterion in \cite{liu}). 
Such a {\em frequency domain} approach was previously invoked in \cite{avalos-trigg}, by way of establishing rational decays for a wholly different fluid-structure PDE model. 
We should state at the outset that one advantage which the frequency domain approach enjoys
over the time domain approach, is that the former eventually allows for a adequate treatment of the pressure variable (as it appears as a forcing term in the $\Omega$-plate equation). 
In particular, upon formally taking the Laplace Transform of \eqref{e:pde-model}-\eqref{ic}, 
so as to obtain a corresponding static fluid-structure system with frequency domain parameter 
$\beta$ (see \eqref{e:static-beta} below), one can then attempt to invoke classic
Stokes Theory for (static) incompressible fluid flows. 

Alternatively, if one were working directly with the time evolving fluid-structure system 
\eqref{e:pde-model}-\eqref{ic}, by way of analyzing the pressure term $p(x,t)|_\Omega$, 
it seems likely to us that there would be the necessity of microlocalizing the fluid-structure system in order to obtain the required \textit{a priori} estimates. 
Besides being quite technical in its own right, such a pseudo-differential approach might 
even be ultimately unavailing, inasmuch as there would be the issue of keeping a close track 
of the time dependent constants which would surely accumulate in the course of
such a microlocal analysis. 
Hence, we are drawn instead to a frequency domain approach which would ultimately invoke the resolvent criterion Theorem~\ref{t:borichev-tomilov} below. 

\medskip
We should also state that uniform stability results for higher dimensional coupled PDE models (namely, involving equations on $n$-dimensional manifolds, with $n>1$) which are attained via a {\em frequency domain} approach are largely not available in the literature; see e.g., 
\cite{avalos-trigg}, \cite{avalos-trigg02}.
We recall the recent polynomial decay result obtained in \cite{MG-1} for a complicated Mindlin-Timoshenko plate model, which also depends upon a frequency domain approach and an argument by contradiction, with a view of invoking the aforesaid resolvent criterion in \cite{borichev}; see also \cite{MG-2}.
In general, those few higher dimensional {\em frequency domain} results which are available typically invoke an argument by contradiction, in the style of \cite{liu}, by way of 
establishing the requisite resolvent estimate in Theorem~\ref{t:borichev-tomilov} below. 
By contrast, in the present paper we \emph{explicitly} generate the necessary frequency domain estimates; see also \cite{avalos-trigg} and \cite{avalos-trigg02}.

%\cite{munoz-et-al},  Timoshenko system with thermoelastic dissipation. 

% SPECTRAL ANALYSIS
\section{Spectral analysis}
In order to establish the sharp estimate of the decay rates for the solutions of the PDE system, 
we will use a recent and powerful \emph{frequency domain} criterion by A.~Borichev and Y.~Tomilov, which for the readers' convenience is recorded below.

\begin{theorem}[\protect\cite{borichev}]
\label{t:borichev-tomilov} %[Theorem~2.4] 
Let $(T(t))_{t\geq 0}$ be a bounded $C_{0}$-semigroup on a Hilbert space $H$
with generator $A$, such that $i\mathbb{R}\subset \rho (A)$. Then, for fixed 
$\alpha >0$ the following are equivalent: 
\begin{align}
(i)\quad & R(is;A)=O(|s|^{\alpha })\,,\qquad |s|\rightarrow \infty \,;
\label{e:asymptotic-tomilov} \\[1mm]
(ii)\quad & \Vert T(t)x\Vert _{H}=o(t^{-1/\alpha })\Vert x\Vert _{{\mathcal{D%
}}(A)}\,,\qquad t\rightarrow +\infty \,.  \notag
\end{align}
\end{theorem}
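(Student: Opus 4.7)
My plan is to prove the two implications separately, with the harder direction being $(i)\Rightarrow (ii)$. Both rely on the Laplace transform correspondence between the semigroup and its resolvent, together with Plancherel's theorem — the latter being the one crucial ingredient that makes the \emph{sharp} rate available on Hilbert spaces (as opposed to general Banach spaces, where only $t^{-1/(\alpha+\varepsilon)}$ is known without further assumptions). Throughout, I would assume without loss of generality that $0\in\rho(A)$; otherwise one works with $(I-A)^{-1}$ in place of $A^{-1}$, using that $\|x\|_{{\mathcal D}(A)}\simeq \|x\|+\|Ax\|$.

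For the easier direction $(ii)\Rightarrow(i)$, I would start from the standard integral representation
\begin{equation*}
R(\lambda;A)A^{-1}x=\int_{0}^{\infty}e^{-\lambda t}\,T(t)A^{-1}x\,dt\,,\qquad \mathrm{Re}\,\lambda>0\,,
\end{equation*}
and pass to the boundary $\lambda=is$ using the hypothesis $i\mathbb{R}\subset\rho(A)$ (which ensures the resolvent is holomorphic up to and across the imaginary axis, so the boundary values exist pointwise). The decay assumption $\|T(t)A^{-1}\|=o(t^{-1/\alpha})$ implies via a routine splitting of the integral into $[0,|s|]$ and $[|s|,\infty)$ that $\|R(is;A)A^{-1}\|\lesssim|s|^{\alpha-1}$ for $|s|$ large, which combined with $\|A\cdot R(is;A)A^{-1}\|=\|I+is R(is;A)A^{-1}\|$ yields the desired $\|R(is;A)\|=O(|s|^\alpha)$.

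For the hard direction $(i)\Rightarrow(ii)$, the plan is to represent the orbit on $\mathcal{D}(A)$ as a contour integral against the resolvent and then exploit the Hilbert space structure via Plancherel. Specifically, for $x\in \mathcal{D}(A^2)$ and $\varepsilon>0$, one writes
\begin{equation*}
T(t)x \;=\; \frac{1}{2\pi i}\int_{\varepsilon-i\infty}^{\varepsilon+i\infty} e^{\lambda t}R(\lambda;A)x\,d\lambda\,,
\end{equation*}
and shifts the contour to the imaginary axis using Cauchy's theorem (permissible by $(i)$ and the fact that $R(\lambda;A)A^{-2}$ decays in $|\lambda|$). Applying Plancherel's theorem to the vector-valued Fourier transform $s\mapsto R(is;A)x$ and using the resolvent bound $\|R(is;A)\|=O(|s|^\alpha)$, one deduces smoothing estimates which, by a dyadic decomposition in frequency combined with an optimization in the cutoff $|s|\sim t^{1/\alpha}$, yield exactly the $t^{-1/\alpha}$ rate. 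The upgrade from $O(t^{-1/\alpha})$ to the ``little-oh'' $o(t^{-1/\alpha})$ comes from the fact that the resolvent tail $R(is;A)$, restricted to $|s|\geq N$, has vanishing contribution to the integral as $N\to\infty$ by dominated convergence once $(i)$ has been established.

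The principal obstacle, and the central novelty of the Borichev--Tomilov argument, is the \emph{sharpness} of the exponent: naive contour-integration estimates in Banach spaces lose a logarithmic or polynomial factor because one controls $L^1$ rather than $L^2$ norms of the resolvent. The key trick is that on a Hilbert space, Plancherel's theorem converts the pointwise resolvent bound $\|R(is;A)\|=O(|s|^\alpha)$ into an $L^2$-bound on $s\mapsto R(is;A)x$ weighted by $|s|^{-2\alpha}$, and this $L^2$ information is exactly what is needed to extract the unweighted decay $t^{-1/\alpha}$ without loss after the dyadic frequency decomposition. Executing this optimization carefully, while simultaneously tracking the domain-norm dependence on the right-hand side, is the technical heart of the argument.
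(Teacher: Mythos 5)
A preliminary point: the paper does not prove Theorem~\ref{t:borichev-tomilov} at all --- it is quoted from Borichev--Tomilov \cite{borichev} and used as a black box in the proof of the main result. So there is no proof in the paper to compare yours against, and your attempt must be measured against the actual Borichev--Tomilov argument. Judged that way, your outline correctly identifies the decisive ingredient (the Hilbert-space/Plancherel structure is what makes the exponent sharp), but as written both directions contain genuine gaps.

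In $(ii)\Rightarrow(i)$, the representation $R(is;A)A^{-1}x=\int_0^\infty e^{-ist}T(t)A^{-1}x\,dt$ is problematic: for $\alpha\geq 1$ the decay $o(t^{-1/\alpha})$ is not integrable at infinity, so the boundary integral need not converge absolutely and the ``routine splitting'' of the tail is unavailable; moreover your cutoff is misplaced, since $\int_0^{|s|}t^{-1/\alpha}dt\simeq |s|^{1-1/\alpha}$, which equals $|s|^{\alpha-1}$ only when $\alpha=1$. The standard repair is the truncated identity $R(is;A)A^{-1}x=\int_0^T e^{-ist}T(t)A^{-1}x\,dt+e^{-isT}T(T)R(is;A)A^{-1}x$ with $T\simeq|s|^{\alpha}$, which absorbs the tail into a term that can be reabsorbed on the left; also note $AR(is;A)A^{-1}=isR(is;A)A^{-1}-A^{-1}$, not $I+isR(is;A)A^{-1}$. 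In $(i)\Rightarrow(ii)$ --- the hard direction --- the contour shift to the imaginary axis is not justified as stated: for $x\in{\mathcal D}(A^2)$ one only gets $\|R(is;A)x\|\lesssim |s|^{\alpha-2}\|A^2x\|$ on the axis, which does not even decay when $\alpha\geq 2$, and when it does decay the naive estimate spends two powers of $A$ to obtain mere boundedness, not the rate $t^{-1/\alpha}$ after spending one power. The crucial step --- that Plancherel plus a dyadic decomposition and an optimization at $|s|\sim t^{1/\alpha}$ ``yields exactly'' the rate with only $\|x\|_{{\mathcal D}(A)}$ on the right --- is precisely the content of the theorem and is asserted rather than carried out; note too that Plancherel relates $\int_0^\infty e^{-2\epsilon t}\|T(t)x\|^2dt$ to $\int_{\mathbb{R}}\|R(\epsilon+is;A)x\|^2ds$ and does not by itself convert the pointwise bound $\|R(is;A)\|=O(|s|^\alpha)$ into the weighted $L^2$ bound you invoke. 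Finally, the $o$-refinement is ordinarily deduced from the operator-norm $O(t^{-1/\alpha})$ bound by a density argument (e.g.\ via $\|T(t)A^{-2}\|\leq\|T(t/2)A^{-1}\|^2=O(t^{-2/\alpha})$), not by dominated convergence on a frequency tail. As it stands the proposal is a plausible roadmap, but the sharp-rate extraction --- the technical heart of \cite{borichev} --- is missing.
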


\medskip To apply the above result, we preliminary need to show that the
imaginary axis belongs to the resolvent set of the dynamics operator ${%
\mathcal{A}}_\rho$. The present Section is entirely devoted to this
objective.

\subsection{$\protect\lambda =0$ is in the resolvent set $\protect\rho ({%
\mathcal{A}}_{\protect\rho })$}

We begin our analysis by showing that the dynamics operator ${\mathcal{A}}%
_{\rho }$ is \emph{boundedly invertible} on the state space $\mathbf{H}$;
the corresponding statement is given separately. In this connection, we will
need the following trace regularity result, which is readily established;
see e.g., Proposition 2 of \cite{avalos-clark}.

\begin{proposition}
\label{extra}Suppose a function $\mu \in \mathbf{L}^{2}(\mathcal{O})$ and
pair $(\varrho ,h)\in H^{1}(\Omega )\times \mathbf{L}^{2}(\mathcal{O})$
satisfy the relation%
\begin{equation}
-\Delta \mu +\nabla \varrho =h\text{, }  \label{dot}
\end{equation}%
where $\mathrm{div}(\mu )=\mathrm{div}(h)=0$. Then $\left. \Delta \mu \cdot
\nu \right\vert _{\partial \mathcal{O}}\in \mathbf{H}^{-\frac{1}{2}%
}(\partial \mathcal{O})$, with the estimate%
\begin{equation}
\left\Vert \left. \Delta \mu \cdot \nu \right\vert _{\partial \mathcal{O}%
}\right\Vert _{\mathbf{H}^{-\frac{1}{2}}(\partial \mathcal{O})}\leq C\left[
\left\Vert \varrho \right\Vert _{H^{1}(\mathcal{O})}+\left\Vert h\right\Vert
_{\mathbf{L}^{2}(\mathcal{O})}\right] .  \label{est}
\end{equation}
\end{proposition}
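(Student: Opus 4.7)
The plan is to exploit the relation \eqref{dot} itself in order to upgrade the \emph{a priori} regularity of $\Delta\mu$, and then to recognise that $\Delta\mu$ is in fact solenoidal, so that a classical normal-trace theorem can be applied.

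First, since by hypothesis $\varrho\in H^{1}(\mathcal{O})$ and $h\in\mathbf{L}^{2}(\mathcal{O})$, relation \eqref{dot} can be rewritten as $\Delta\mu=\nabla\varrho-h$, which immediately places $\Delta\mu$ in $\mathbf{L}^{2}(\mathcal{O})$, with the straightforward bound
\[
\|\Delta\mu\|_{\mathbf{L}^{2}(\mathcal{O})}\;\leq\;\|\varrho\|_{H^{1}(\mathcal{O})}+\|h\|_{\mathbf{L}^{2}(\mathcal{O})}.
\]
Next I would take the divergence of \eqref{dot} in the sense of distributions on $\mathcal{O}$: since $\mathrm{div}(\mu)=0$ gives $\mathrm{div}(\Delta\mu)=\Delta\,\mathrm{div}(\mu)=0$, and $\mathrm{div}(h)=0$ by hypothesis, one reads off $\Delta\varrho=\mathrm{div}(\nabla\varrho)=0$. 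Hence $\varrho$ is harmonic, $\nabla\varrho$ is divergence-free, and therefore so is $\Delta\mu=\nabla\varrho-h$.

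The conclusion then follows from the classical trace theorem which asserts that any $v\in\mathbf{L}^{2}(\mathcal{O})$ with $\mathrm{div}(v)\in L^{2}(\mathcal{O})$ admits a normal trace $v\cdot\nu|_{\partial\mathcal{O}}\in H^{-1/2}(\partial\mathcal{O})$, satisfying
\[
\|v\cdot\nu\|_{H^{-1/2}(\partial\mathcal{O})}\;\leq\; C\bigl(\|v\|_{\mathbf{L}^{2}(\mathcal{O})}+\|\mathrm{div}(v)\|_{L^{2}(\mathcal{O})}\bigr).
\]
Applied to the solenoidal vector field $v=\Delta\mu$, this yields $\|\Delta\mu\cdot\nu\|_{H^{-1/2}(\partial\mathcal{O})}\leq C\|\Delta\mu\|_{\mathbf{L}^{2}(\mathcal{O})}$, which combined with the displayed bound above gives exactly \eqref{est}.

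The only delicate step is the rigorous justification of $\mathrm{div}(\Delta\mu)=\Delta\,\mathrm{div}(\mu)=0$ at the low regularity $\mu\in\mathbf{L}^{2}$, but this identity holds without difficulty in the sense of distributions on the interior of $\mathcal{O}$ (no boundary contribution intervenes at this stage), so that the harmonicity of $\varrho$ and the solenoidality of $\Delta\mu$ are unambiguous. Once this is in place, the remainder of the argument is a routine invocation of the normal-trace theorem for $\mathbf{L}^{2}$-vector fields with $L^{2}$ divergence on a smooth bounded domain.
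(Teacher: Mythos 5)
Your proof is correct and is essentially the argument the paper relies on (it defers to Proposition 2 of Avalos--Clark, which proceeds the same way): read off from \eqref{dot} that $\Delta \mu = \nabla \varrho - h \in \mathbf{L}^{2}(\mathcal{O})$ with $\mathrm{div}(\Delta\mu)=\Delta\,\mathrm{div}(\mu)=0$ distributionally, and then apply the normal-trace theorem for $L^{2}$ vector fields with $L^{2}$ divergence (e.g., Temam) to obtain \eqref{est}.
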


\bigskip

\begin{proposition}
The generator $\mathcal{A}_{\rho }:D(\mathcal{A}_{\rho })\subset \mathbf{H}%
_{\rho }\rightarrow \mathbf{H}_{\rho }$ is boundedly invertible on $\mathbf{H%
}_{\rho }$. Namely, $\lambda =0$ is in the resolvent set of $\mathcal{A}%
_{\rho }$.
\end{proposition}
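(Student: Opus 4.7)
Given $F=[f_1,f_2,f^*]\in\mathbf{H}_\rho$, I need to produce a unique $X=[w_1,w_2,u]\in D(\mathcal{A}_\rho)$ with $\mathcal{A}_\rho X=F$ and a bound of the form $\|X\|_{D(\mathcal{A}_\rho)}\lesssim \|F\|_{\mathbf{H}_\rho}$. Reading off the three rows of \eqref{domain}, and writing $p=G_{\rho,1}w_1+G_{\rho,2}u$ for the abstract pressure, the resolvent equation amounts to
\begin{equation*}
w_2=f_1,\quad \Delta^2 w_1-p\big|_\Omega=-P_\rho f_2 \text{ in } \Omega,\quad -\Delta u+\nabla p=-f^*,\;\mathrm{div}(u)=0 \text{ in } \mathcal{O},
\end{equation*}
together with $w_1=\partial_\nu w_1=0$ on $\partial\Omega$, $u|_S=0$, $u|_\Omega=(0,0,f_1)$, and the mean-zero constraint $\int_\Omega w_1\,d\sigma=0$ embedded in $\mathbf{H}_\rho$.

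My plan is to solve the Stokes and the biharmonic subproblems separately, coupling them through the pressure, and to fix the pressure constant by the mean-zero condition on $w_1$. Explicitly: (i) set $w_2=f_1$; (ii) apply classical Cattabriga--Ladyzhenskaya regularity to the inhomogeneous Stokes system to obtain a unique $u\in\mathbf{H}^2(\mathcal{O})$ and $p_0\in H^1(\mathcal{O})$, determined modulo a constant; the flux compatibility $\int_\Omega f_1\,d\sigma=0$ holds since $f_1\in V_\rho\subset L^2(\Omega)/\mathbb{R}$, and the boundary datum belongs to $\mathbf{H}^{3/2}(\partial\mathcal{O})$ because $f_1\in H_0^2(\Omega)$ vanishes together with its normal derivative on $\partial\Omega$; (iii) solve the clamped biharmonic problem $\Delta^2 w_1=(p_0+c)\big|_\Omega-P_\rho f_2\in H^{-1}(\Omega)$ for an undetermined scalar $c\in\mathbb{R}$, whose unique $H^3(\Omega)\cap H_0^2(\Omega)$-solution is $w_1=w_1^{(0)}+c\,w^*$ with $w^*$ denoting the clamped biharmonic of $\Delta^2 w^*=1$; (iv) fix $c$ uniquely through $\int_\Omega w_1\,d\sigma=0$, which is solvable because testing $\Delta^2 w^*=1$ against $w^*$ yields $\int_\Omega w^*=\|\Delta w^*\|_{L^2}^2>0$. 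The scalar $c$ thus produced depends linearly and boundedly on the data.

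For injectivity I would verify that $\mathcal{A}_\rho X=0$ forces $X=0$: row~1 gives $w_2=0$; row~3 is the homogeneous Stokes problem with zero boundary data, whose unique solution is $u=0$ with $p$ equal to a constant $c$; row~2 then reduces to the clamped biharmonic $\Delta^2 w_1=c$, giving $w_1=c\,w^*$; the mean-zero condition together with $\int_\Omega w^*>0$ forces $c=0$, so $X=0$. The remaining domain conditions in \eqref{e:domain-of-generator} hold by construction, and the trace $\Delta u\cdot\nu|_{\partial\mathcal{O}}\in\mathbf{H}^{-1/2}(\partial\mathcal{O})$ is supplied at once by Proposition~\ref{extra} with $\mu=u$, $\varrho=p$, $h=-f^*$. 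The bound $\|X\|_{\mathbf{H}_\rho}\lesssim\|F\|_{\mathbf{H}_\rho}$ follows by chaining the Stokes regularity estimate, the clamped biharmonic estimate, and the scalar control $|c|\lesssim\|w_1^{(0)}\|_{L^2}$.

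The main subtlety I anticipate is the interaction between the pressure gauge and the structural mean-zero constraint: the Stokes problem determines $p$ only up to an additive constant, and each admissible choice produces a distinct $w_1$ (the two differ by a multiple of $w^*$, which is a genuine element of $H_0^2(\Omega)$). It is precisely the mean-zero condition inherited from the physical identity \eqref{zero}, and built into the space $\mathbf{H}_\rho$, that removes this one-dimensional ambiguity; once this matching is carried out, the remainder of the proof is a routine assembly of Cattabriga--Ladyzhenskaya and clamped biharmonic regularity with the trace estimate of Proposition~\ref{extra}.
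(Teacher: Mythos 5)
Your proposal is correct and follows essentially the same route as the paper: solve the Stokes system via Cattabriga--Ladyzhenskaya/Temam regularity (with the zero-mean compatibility coming from the first data component), then the clamped biharmonic problem, and your device of adding an undetermined constant $c$ to the pressure and fixing it through $\int_\Omega w_1\,dx=0$ together with $\int_\Omega w^*\,dx=\|\Delta w^*\|_{L^2(\Omega)}^2>0$ is exactly the paper's projection $\mathbb{P}$ of $H_0^2(\Omega)$ onto $H_0^2(\Omega)\cap L^2(\Omega)/\mathbb{R}$, whose orthogonal complement is spanned by the same function $w^*$ (there denoted $\varpi$). The only step the paper spells out that you leave tacit is the verification that the constructed pressure coincides with $G_{\rho,1}w_1+G_{\rho,2}u$, i.e.\ solves the Robin problem \eqref{bvp}; this is routine from your system once the trace estimate of Proposition~\ref{extra} (which you already invoke) is in hand, and your explicit injectivity check is a harmless addition that the paper leaves implicit in the forced nature of its construction.
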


\begin{proof}
Given data $[\omega _{1}^{\ast },\omega _{2}^{\ast },\mu ^{\ast }]\in 
\mathbf{H}_{\rho }$, we look for $[\omega _{1},\omega _{2},\mu ]\in {%
\mathcal{D}}({\mathcal{A}}_{\rho })$ which solves 
\begin{equation}
\mathcal{A}_{\rho }\left[ 
\begin{array}{c}
\omega _{1} \\ 
\omega _{2} \\ 
\mu%
\end{array}%
\right] =\left[ 
\begin{array}{c}
\omega _{1}^{\ast } \\ 
\omega _{2}^{\ast } \\ 
\mu ^{\ast }%
\end{array}%
\right] \,.  \label{resolve}
\end{equation}%
To this end, we must search $[\omega _{1},\omega _{2},\mu ]$ in ${\mathcal{D}%
}({\mathcal{A}}_{\rho })$ and $\pi _{0}\in H^{1}({\mathcal{O}})$ which solve 
\begin{subequations}
\label{e:resolvent-system}
\begin{align}
& \omega _{2}=\omega _{1}^{\ast } & & \text{in $\Omega $}
\label{e:resolvent_r1} \\
& P_{\rho }^{-1}\Delta ^{2}\omega _{1}-P_{\rho }^{-1}\pi _{0}\big|_{\Omega
}=-\omega _{2}^{\ast } & & \text{in $\Omega $}  \label{e:resolvent_r2} \\
& \omega _{1}=\frac{\partial \omega _{1}}{\partial n}=0 & & \text{on $%
\partial \Omega $}  \label{e:resolvent_r3} \\
& \Delta \mu -\nabla \pi _{0}=\mu ^{\ast } & & \text{in ${\mathcal{O}}$}
\label{e:resolvent_r4} \\
& \text{div}\mu =0 & & \text{in ${\mathcal{O}}$}  \label{e:resolvent_r5} \\
& \mu =0 & & \text{on $S$}  \label{e:resolvent_r6} \\
& \mu =(0,0,\omega _{2}) & & \text{on $\Omega $}\,.  \label{e:resolvent_r7}
\end{align}%
Moreover, we must justify that the pressure variable $\pi _{0}$ is given by
the expression 
\end{subequations}
\begin{equation}
\pi _{0}=G_{1}\omega _{1}+G_{1}\mu \,,  \label{e:resolvent-pi_0}
\end{equation}%
where $G_{i}$ denote, in short, the linear operators $G_{\rho ,i}$, $i=1,2$
defined by \eqref{G1} and \eqref{G2}, respectively (in line with the
appearance of ${\mathcal{A}}_{\rho }$ in \eqref{domain}).

\medskip \noindent \emph{(i) The Plate Velocity. } From %
\eqref{e:resolvent_r1}, the velocity component $\omega _{2}$ is immediately
resolved.

\medskip \noindent \emph{(ii) The Fluid Velocity. } We next consider the
Stokes system \eqref{e:resolvent_r4}-\eqref{e:resolvent_r7}. From %
\eqref{e:resolvent_r1} and \eqref{e:resolvent_r7} it follows that $\mu
|_{\partial \mathcal{O}}$ satisfies 
\begin{equation}
\int_{\partial \mathcal{O}}\mu \cdot \nu \,d\sigma =\int_{\Omega }\big[%
0,0,\mu _{3}\big]^{T}\cdot \nu \,d\sigma =\int_{\Omega }\omega _{2}\,d\sigma
=\int_{\Omega }\omega _{1}^{\ast }\,d\sigma =0  \label{cond}
\end{equation}%
(as $[\omega _{1}^{\ast },\omega _{2}^{\ast },\mu ^{\ast }]\in \mathbf{H}%
_{\rho }$). Since this compatibility condition is satisfied and data $%
\left\{ \mu ^{\ast },\omega _{1}^{\ast }\right\} \in \mathbf{L}^{2}(\mathcal{%
O})\times H_{0}^{2}(\Omega )$, we can find a unique (fluid and pressure)
pair $\left( \mu ,q_{0}\right) \in \left[ \mathbf{H}^{2}(\mathcal{O})\cap 
\mathcal{H}_{f}\right] \times \mathbf{H}^{1}(\mathcal{O})/\mathbb{R}$ which
solves 
\begin{subequations}
\label{e:static-stokes}
\begin{eqnarray}
&&\Delta \mu -\nabla q_{0}=\mu ^{\ast }\ \text{in }\mathcal{O}
\label{static-stokes-s1} \\
&&\text{div}(\mu )=0\text{ \ in }\mathcal{O}  \label{static-stokes-s2} \\
&&\mu =(0,0,\omega _{1}^{\ast })\text{ \ on }\Omega \,,\;\text{ \ }\mu =0%
\text{ \ on }S\,.  \label{static-stokes-s3}
\end{eqnarray}%
Moreover, one has the estimate 
\end{subequations}
\begin{equation}
\left\Vert \mu \right\Vert _{\mathbf{H}^{2}(\mathcal{O})\cap \mathcal{H}%
_{f}}+\left\Vert q_{0}\right\Vert _{\mathbf{H}^{1}(\mathcal{O})/\mathbb{R}%
}\leq C\,\left[ \left\Vert \mu ^{\ast }\right\Vert _{\mathcal{H}%
_{f}}+\left\Vert \omega _{1}^{\ast }\right\Vert _{H_{0}^{2}(\Omega )}\,%
\right] ;  \label{stokes}
\end{equation}%
see e.g., \cite[Proposition 2.3, p.~25]{temam}.

\medskip \noindent \emph{(iii) The Mechanical Displacement. } Subsequently,
we consider the equations \eqref{e:resolvent_r2}-\eqref{e:resolvent_r3}
pertaining to the (plate) component $\omega _{1}$. By classical elliptic
theory % (see \cite{L-M}) 
there exists a solution $\widehat{\omega }_{1}\in H^{3}(\Omega )\cap
H_{0}^{2}(\Omega )$ to the boundary value problem 
\begin{equation}
\begin{cases}
\Delta ^{2}\widehat{\omega }_{1}=q_{0}|_{\Omega }-P_{\rho }\omega _{2}^{\ast
} & \text{in $\Omega $} \\[1mm] 
\widehat{\omega }_{1}=\frac{\partial \widehat{\omega }_{1}}{\partial \nu }=0
& \text{on $\partial \Omega $}\,,%
\end{cases}
\label{p_1}
\end{equation}%
where $q_{0}$ is the (pressure) variable in \eqref{static-stokes-s1};
moreover, the following estimate holds true:%
\begin{eqnarray}
\left\Vert \widehat{\omega }_{1}\right\Vert _{H^{3}(\Omega )\cap
H_{0}^{2}(\Omega )} &\leq &C\left\Vert \left. q_{0}\right\vert _{\Omega
}+P_{\rho }\omega _{2}^{\ast }\right\Vert _{H^{-1}(\Omega )}  \notag \\
&\leq &C\left\Vert \left. q_{0}\right\vert _{\Omega }\right\Vert
_{H^{1/2}(\Omega )}+\left\Vert P_{\rho }\omega _{2}^{\ast }\right\Vert
_{H^{-1}(\Omega )}  \notag \\
&\leq &C\left\Vert [\omega _{1}^{\ast },\omega _{2}^{\ast },\mu ^{\ast
}]\right\Vert _{\mathbf{H}_{\rho }}.  \label{plate_hat}
\end{eqnarray}%
(In the last inequality we have also invoked Sobolev trace theory and %
\eqref{stokes}).

Now if, as in \cite{igor}, we let $\mathbb{P}$ denote the orthogonal
projection of $H_{0}^{2}(\Omega )$ onto $H_{0}^{2}(\Omega )\cap L^{2}(\Omega
)/\mathbb{R}$ (orthogonal with respect to the inner product $[\omega ,\tilde{%
\omega}]\rightarrow \left( \Delta \omega ,\Delta \tilde{\omega}\right)
_{\Omega }$), then one can readily show that its orthogonal complement $I-%
\mathbb{P}$ can be characterized as 
\begin{equation}
(I-\mathbb{P})H_{0}^{2}(\Omega )=\text{span}\Big\{\varpi :\;\Delta
^{2}\varpi =1\text{ in }\Omega \,,\;\varpi =\frac{\partial \varpi }{\partial
\nu }=0\text{ on }\partial \Omega \Big\}\,;  \label{constant}
\end{equation}%
see \cite[Remark 2.1, p.~6]{igor}. \newline
With these projections, we then set 
\begin{equation}
\omega _{1}:=\mathbb{P}\widehat{\omega }_{1}\,,\qquad \pi _{0}:=q_{0}-\Delta
^{2}(I-\mathbb{P})\widehat{\omega }_{1}\,;  \label{assign}
\end{equation}%
therefore, by (\ref{p_1}) and $\widehat{\omega }_{1}=\mathbb{P}\widehat{%
\omega }_{1}+(I-\mathbb{P})\widehat{\omega }_{1}$, we will have that $\omega_1$ solves 
\eqref{e:resolvent_r2}-\eqref{e:resolvent_r3}. 
(And of course since $\pi _{0}$ and $q_{0}$ differ only by a constant, then the pair 
$(\mu,\pi_0)$ also solves \eqref{e:resolvent_r4}-\eqref{e:resolvent_r7}.) 
Thus, in view of elliptic theory, (\ref{stokes}) and (\ref{plate_hat}), we obtain the estimate 
\begin{equation}
\begin{split}
& \left\Vert \omega _{1}\right\Vert _{H^{3}(\Omega )\cap H_{0}^{2}(\Omega
)\cap L^{2}(\Omega )/\mathbb{R}}+\left\Vert \pi _{0}\right\Vert _{H^{1}(%
\mathcal{O})}\leq  \\[2mm]
& \qquad \qquad \qquad \leq C\left( \left\Vert \Delta ^{2}(I-\mathbb{P})%
\widehat{\omega }_{1}\right\Vert _{L^{2}(\Omega )}+\left\Vert
q_{0}\right\Vert _{H^{1}(\mathcal{O})/\mathbb{R}}+\left\Vert P_{\rho }\omega
_{2}^{\ast }\right\Vert _{H^{-1}(\Omega )}\right)  \\[2mm]
& \qquad \qquad \qquad \leq C\left\Vert [\omega _{1}^{\ast },\omega
_{2}^{\ast },\mu ^{\ast }]\right\Vert _{\mathbf{H}_{\rho }}\,,
\end{split}
\label{plate}
\end{equation}
where implicity we are also using the fact that $\Delta ^{2}(I-\mathbb{P})\in \mathcal{L}
(H_{0}^{2}(\Omega ),\mathbb{R})$, by the Closed Graph Theorem.

\medskip \noindent \emph{(v) Resolution of the Pressure. } 
At this point we invoke Proposition~\ref{extra} and \eqref{stokes} to have the following trace regularity for the fluid velocity in \eqref{e:static-stokes}: 
\begin{equation}
\begin{split}
\big\|\Delta \mu \cdot \nu \big\|_{H^{-1/2}(\partial \mathcal{O})}& \leq C\,%
\big[\Vert q_{0}\Vert _{H^{1}(\mathcal{O})}+\Vert \mu ^{\ast }\Vert _{%
\mathbf{L}^{2}(\mathcal{O})}\big] \\[1mm]
& \leq C\,\big[\Vert \mu ^{\ast }\Vert _{\mathcal{H}_{f}}+\Vert \omega
_{1}^{\ast }\Vert _{H_{0}^{2}(\Omega )}\big]\,.
\end{split}
\label{v_tr}
\end{equation}%
In consequence, the pressure variable $\pi _{0}$ of problem %
\eqref{e:resolvent_r1}-\eqref{e:resolvent_r7}---given explicitly in %
\eqref{assign}---solves \textit{a fortiori} 
\begin{equation}
\begin{cases}
\Delta \pi _{0}=0 & \text{in $\mathcal{O}$} \\[1mm]
\frac{\partial \pi _{0}}{\partial \nu }=\Delta \mu \cdot \nu |_{S} & \text{%
on $S$} \\[1mm]
\frac{\partial \pi _{0}}{\partial \nu }+P_{\rho }^{-1}\pi _{0}=P_{\rho
}^{-1}\Delta ^{2}\omega _{1}+\Delta \mu ^{3}\big|_{\Omega } & \text{on $%
\Omega $.}%
\end{cases}
\label{e:statisfied-by-pizero}
\end{equation}%
We justify the previous assertion. Applying the divergence operator to both
sides of \eqref{e:resolvent_r4} and using $\text{div}\mu =\text{div}\mu
^{\ast }=0$, we obtain that $\pi _{0}$ is harmonic in ${\mathcal{O}}$. Next,
dotting both sides of \eqref{e:resolvent_r4} with repect to the normal
vector, and subsequently taking the boundary trace on the portion $S$, we
get the corresponding boundary condition in \eqref{e:statisfied-by-pizero}.
(Implicitly we are also using $\mu ^{\ast }\cdot \nu |_{S}=0$, as $[\omega
_{1}^{\ast },\omega _{2}^{\ast },\mu ^{\ast }]\in \mathbf{H}_{\rho }$).

Finally, since $\mu ^{\ast }\cdot \nu |_{\Omega }=\omega _{2}^{\ast }$, as $%
[\omega _{1}^{\ast },\omega _{2}^{\ast },\mu ^{\ast }]\in \mathbf{H}_{\rho }$%
, from \eqref{e:resolvent_r2} it follows that 
\begin{eqnarray*}
P_{\rho }^{-1}\pi _{0}\big|_{\Omega } &=&\omega _{2}^{\ast }+P_{\rho
}^{-1}\Delta ^{2}\omega _{1} \\
&=&\Delta \mu \cdot \nu |_{\Omega }-\nabla \pi _{0}\cdot \nu |_{\Omega
}+P_{\rho }^{-1}\Delta ^{2}\omega _{1},
\end{eqnarray*}%
which gives the boundary condition on $\Omega $. \newline
Necessarily then, the pressure term must be given by the expression 
\begin{equation}
\pi _{0}=G_{1}\omega _{1}+G_{2}\mu \in H^{1}({\mathcal{O}})\,,  \label{d}
\end{equation}%
with the well-definition of the right hand side ensured by \eqref{v_tr}.

\medskip Finally, we collect: the fluid variable $\mu $ as the solution to %
\eqref{e:static-stokes} with the estimate \eqref{stokes}, the respective
structure and pressure variables $\omega _{1},\omega _{2}$ and $\pi _{0}$
given by \eqref{e:resolvent_r1},\eqref{assign} along with the estimate %
\eqref{plate} (and where $\widehat{\omega }_{1}$ is defined by \eqref{p_1});
(\ref{d}) characterizes the pressure $\pi _{0}$ in terms of the variables $%
\omega _{1}$ and $\mu $. This shows that the solution of %
\eqref{e:resolvent-system} actually belongs to ${\mathcal{D}}({\mathcal{A}}%
_{\rho })$. In short, $0\in \rho (\mathcal{A}_{\rho })$, which concludes the
proof. 
\end{proof}

% Subsection

\subsection{$\protect\lambda=i\protect\beta$ is in the resolvent set $%
\protect\rho({\mathcal{A}})$}

Let us recall the expression of the dynamics operator semigroup $\mathcal{A}%
_{\rho }$ in \eqref{domain}. In straightforward fashion, one can then
compute the % Hilbert space 
associated adjoint operator $\mathcal{A}_{\rho }^{\ast }:D(\mathcal{A}_{\rho
})\subset \mathbf{H}_{\rho }\rightarrow \mathbf{H}_{\rho }$ to be 
\begin{equation}
\mathcal{A}_{\rho }^{\ast }\equiv 
\begin{bmatrix}
0 & -I & 0 \\ 
P_{\rho }^{-1}\Delta ^{2}-P_{\rho }^{-1}G_{\rho ,1}|_{\Omega } & 0 & P_{\rho
}^{-1}G_{\rho ,2}|_{\Omega } \\ 
\nabla G_{\rho ,1} & 0 & \Delta -\nabla G_{\rho ,2}%
\end{bmatrix}%
\,,  \label{A_*}
\end{equation}%
with $D(\mathcal{A}_{\rho }^{\ast })=D(\mathcal{A}_{\rho })$. %\label{D*}
The above operator will be utilized in the proof of the following result.

\begin{proposition}
Let $\sigma ({\mathcal{A}}_{\rho })$ be the spectrum of the dynamics
operator ${\mathcal{A}}_{\rho }$ defined by \eqref{domain}-%
\eqref{e:domain-of-generator}. Then $i\mathbb{R}\cap \sigma ({\mathcal{A}}%
_{\rho })=\emptyset $.
\end{proposition}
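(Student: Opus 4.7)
The plan is to combine compactness of the resolvent $\mathcal{A}_{\rho }^{-1}$ with the underlying energy dissipation in order to rule out imaginary eigenvalues. From the previous proposition, $0\in \rho (\mathcal{A}_{\rho })$, so $\mathcal{A}_{\rho }^{-1}\colon \mathbf{H}_{\rho }\to \mathbf{H}_{\rho }$ is bounded, with range $D(\mathcal{A}_{\rho })$. Reading off \eqref{e:domain-of-generator}, one has the continuous inclusion $D(\mathcal{A}_{\rho })\hookrightarrow [H^{3}(\Omega )\cap H_{0}^{2}(\Omega )]\times H_{0}^{2}(\Omega )\times \mathbf{H}^{2}(\mathcal{O})$, whereas the ambient norm on $\mathbf{H}_{\rho }$ is only that of $H_{0}^{2}(\Omega )\times H_{0}^{1}(\Omega )\times \mathbf{L}^{2}(\mathcal{O})$. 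Rellich's theorem, applied componentwise, therefore forces $D(\mathcal{A}_{\rho })\hookrightarrow \mathbf{H}_{\rho }$ to be compact, so $\mathcal{A}_{\rho }^{-1}$ is a compact operator on $\mathbf{H}_{\rho }$ and $\sigma (\mathcal{A}_{\rho })$ consists exclusively of isolated eigenvalues of finite algebraic multiplicity. Since $0\in \rho (\mathcal{A}_{\rho })$, the task reduces to showing $i\beta \notin \sigma _{p}(\mathcal{A}_{\rho })$ for every $\beta \in \mathbb{R}\setminus \{0\}$.

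The key tool is the dissipation identity: for every $\Phi =[w_{1},w_{2},u]\in D(\mathcal{A}_{\rho })$,
\begin{equation*}
\mathrm{Re}\,(\mathcal{A}_{\rho }\Phi ,\Phi )_{\mathbf{H}_{\rho }}\;=\;-\|\nabla u\|^{2}_{\mathbf{L}^{2}(\mathcal{O})}\,.
\end{equation*}
This is precisely the identity already underpinning the contractivity of $\{e^{\mathcal{A}_{\rho }t}\}$ in Theorem~\ref{well}: the coupling contributions involving the pressure operators $G_{\rho ,1}$ and $G_{\rho ,2}$ cancel after one application of Green's formula in the fluid component, using the boundary match $u|_{\Omega }=(0,0,w_{2})$ and $u|_{S}=0$, so only the viscous term survives. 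Now suppose $\mathcal{A}_{\rho }\Phi =i\beta \Phi $ for some $\beta \neq 0$ and some $\Phi \in D(\mathcal{A}_{\rho })$. Taking real parts of the inner product with $\Phi$ gives $\|\nabla u\|^{2}_{\mathbf{L}^{2}(\mathcal{O})}=0$; the connectedness of $\mathcal{O}$ combined with $u|_{S}=0$ on the portion $S$ of positive surface measure then forces $u\equiv 0$ in $\mathcal{O}$. The trace match on $\Omega$ yields $w_{2}\equiv 0$, and the first-row component $w_{2}=i\beta w_{1}$ of the eigenvalue relation gives $w_{1}\equiv 0$ since $\beta \neq 0$. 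Hence $\Phi =0$, so no purely imaginary number can be an eigenvalue, and by Step~1 none can lie in $\sigma (\mathcal{A}_{\rho })$.

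The only substantive step is the dissipation identity, whose verification amounts to checking that the boundary contributions produced by integrating $(\Delta u,u)_{\mathcal{O}}$ by parts cancel exactly against the coupling terms arising from $P_{\rho }^{-1}G_{\rho ,2}u|_{\Omega }$ and $\nabla G_{\rho ,1}w_{1}$ in the matrix expression \eqref{domain} for $\mathcal{A}_{\rho }$; this is precisely the cancellation effected in the Lumer-Phillips analysis of \cite{avalos-clark} that yielded Theorem~\ref{well}. The adjoint expression \eqref{A_*} is not required for this route, but would enter naturally in the alternative strategy that bypasses compact embedding, namely proving injectivity of both $i\beta I-\mathcal{A}_{\rho }$ and $i\beta I-\mathcal{A}_{\rho }^{*}$ together with closed range via a variational solvability argument modelled on the one used to establish $0\in \rho (\mathcal{A}_{\rho })$.
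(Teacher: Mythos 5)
Your proof is correct, but it takes a genuinely different route from the paper's. You collapse the residual and continuous spectrum in one stroke by arguing that $\mathcal{A}_{\rho }^{-1}$ is compact, so that $\sigma (\mathcal{A}_{\rho })$ is pure point spectrum, and then you exclude imaginary eigenvalues exactly as the paper does in its Step~1 (the dissipation identity, i.e.\ \eqref{long_2} with real parts taken, followed by $\mu \equiv 0\Rightarrow \omega _{2}\equiv 0\Rightarrow \omega _{1}\equiv 0$). The paper never invokes compactness: it handles the residual spectrum through the adjoint \eqref{A_*}, and then spends most of the proof on the approximate spectrum, via a normalized sequence, the pressure decomposition \eqref{decomp}, the Stokes estimate \eqref{q0}, the harmonic trace estimate \eqref{q2}, and the characterization \eqref{constant} of $(I-\mathbb{P})H_{0}^{2}(\Omega )$, which forces $\omega _{1,n}\rightarrow 0$ and contradicts the normalization in \eqref{approx}. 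Two remarks on your shortcut. First, ``reading off'' \eqref{e:domain-of-generator} gives only a set inclusion; the \emph{continuity} of $D(\mathcal{A}_{\rho })\hookrightarrow \big[H^{3}(\Omega )\cap H_{0}^{2}(\Omega )\big]\times H_{0}^{2}(\Omega )\times \mathbf{H}^{2}(\mathcal{O})$ needs either the closed graph theorem (using closedness of the generator) or, more directly, the estimates \eqref{stokes} and \eqref{plate} from the proof that $0\in \rho (\mathcal{A}_{\rho })$, which show that $\mathcal{A}_{\rho }^{-1}$ maps $\mathbf{H}_{\rho }$ boundedly into that smoother product space; with either fix, Rellich indeed yields compactness and your reduction is legitimate. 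Second, note the trade-off: your route leans entirely on the exact domain characterization imported from \cite{avalos-clark} (full $H^{3}\times H^{2}\times \mathbf{H}^{2}$ regularity of domain elements, with uniform bounds), whereas the paper's approximate-spectrum argument uses only the $\mathbf{H}_{\rho }$-level energy identity plus Stokes and harmonic-function estimates applied directly to the resolvent relation, so it survives if that regularity were weakened, and it also rehearses the explicit frequency-domain estimates that drive Section~3. Conversely, taking the stated domain at face value, your argument is shorter and gives the stronger structural conclusion that $\sigma (\mathcal{A}_{\rho })$ consists of isolated eigenvalues of finite algebraic multiplicity.
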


\begin{proof}
Let $\sigma _{p}({\mathcal{A}}_{\rho })$, $\sigma _{r}({\mathcal{A}}_{\rho })
$, $\sigma _{r}({\mathcal{A}}_{\rho })$ denote---respectively---the point,
continuous and residual spectrum of the operator ${\mathcal{A}}$. \newline
1. (\emph{Point spectrum}) We aim at showing that $i\mathbb{R}\cap \sigma
_{p}(\mathcal{A}_{\rho })=\emptyset $. Given $\beta \in \mathbb{R}\setminus
\{0\}$, we consider the equation 
\begin{equation}
\mathcal{A}_{\rho }\left[ 
\begin{array}{c}
\omega _{1} \\ 
\omega _{2} \\ 
\mu 
\end{array}%
\right] =i\beta \left[ 
\begin{array}{c}
\omega _{1} \\ 
\omega _{2} \\ 
\mu 
\end{array}%
\right]   \label{relate}
\end{equation}%
for some $[\omega _{1},\omega _{2},\mu ]\in D(\mathcal{A}_{\rho })$.
Moreover, we set%
\begin{equation}
\pi _{0}\equiv G_{\rho ,1}(w_{1})+G_{\rho ,2}(u).  \label{pi}
\end{equation}%
Taking the inner product of both sides, and subsequently integrating by
parts, then it follows 
\begin{eqnarray}
i\beta \left\Vert \left[ 
\begin{array}{c}
\omega _{1} \\ 
\omega _{2} \\ 
\mu 
\end{array}%
\right] \right\Vert _{\mathbf{H}_{\rho }}^{2} &=&\left( \mathcal{A}_{\rho }%
\left[ 
\begin{array}{c}
\omega _{1} \\ 
\omega _{2} \\ 
\mu 
\end{array}%
\right] ,%
\begin{array}{c}
\omega _{1} \\ 
\omega _{2} \\ 
\mu 
\end{array}%
\right)   \notag \\
&&  \notag \\
&=&(\Delta \omega _{2},\Delta \omega _{1})_{\Omega }+(-\Delta ^{2}\omega
_{1}+\left. \pi _{0}\right\vert _{\Omega },\omega _{2})_{\Omega }+\left(
\Delta \mu -\nabla \pi _{0},\mu \right) _{\mathcal{O}}  \notag \\
&&  \notag \\
&=&(\Delta \omega _{2},\Delta \omega _{1})_{\Omega }+\left( \nabla \Delta
\omega _{1},\nabla \omega _{2}\right) _{\Omega }+(\left. \pi _{0}\right\vert
_{\Omega }(0,0,1),(\mu ^{1},\mu ^{2},\omega _{2}))_{\Omega }  \notag \\
&&\text{ \ }-(\nabla \mu ,\nabla \mu )_{\mathcal{O}}+\left\langle \frac{%
\partial \mu }{\partial \nu },\mu \right\rangle _{\Omega }-\left\langle \pi
_{0}\nu ,\mu \right\rangle _{\Omega }  \notag \\
&&  \notag \\
&=&(\Delta \omega _{2},\Delta \omega _{1})_{\Omega }-\left( \Delta \omega
_{1},\Delta \omega _{2}\right) _{\Omega }-(\nabla \mu ,\nabla \mu )_{%
\mathcal{O}}  \notag \\
&&+\left( \left[ 
\begin{array}{c}
\partial _{x_{3}}\mu ^{1} \\ 
\partial _{x_{3}}\mu ^{2} \\ 
\partial _{x_{3}}\mu ^{3}%
\end{array}%
\right] ,\left[ 
\begin{array}{c}
0 \\ 
0 \\ 
\mu ^{3}%
\end{array}%
\right] \right) _{\Omega },  \label{long}
\end{eqnarray}%
or%
\begin{equation}
i\beta \left\Vert \left[ 
\begin{array}{c}
\omega _{1} \\ 
\omega _{2} \\ 
\mu 
\end{array}%
\right] \right\Vert _{\mathbf{H}_{\rho }}^{2}=-\left\Vert \nabla \mu
\right\Vert _{\mathcal{O}}^{2}-2i\,\text{Im}(\Delta \omega _{1},\Delta
\omega _{2})_{\Omega };  \label{long_2}
\end{equation}

whence we obtain 
\begin{equation}
\mu =0\quad \text{ in }\mathcal{O}\,.  \label{fluid}
\end{equation}

In turn, the boundary condition $\mu =(0,0,\omega _{2})$ on $\Omega $,
intrinsic to elements of ${\mathcal{D}}(\mathcal{A}_{\rho })$, yields as
well 
\begin{equation}
\omega _{2}=0\quad \text{ in }\Omega \,.  \label{velocity}
\end{equation}

And further in turn, the first component relation in (\ref{relate}),
combined with the appearance of $\mathcal{A}_{\rho }$ in (\ref{domain}),
yield $i\beta \omega_1=\omega_2$. Hence for $\beta \ne 0$, 
\begin{equation}  \label{displacement}
\omega _{1}=0 \quad \text{ in }\Omega\,.
\end{equation}
The relations (\ref{fluid}), (\ref{velocity}) and (\ref{displacement}) give
the conclusion that $i\beta $ is not an eigenvalue of $\mathcal{A}_{\rho }$.

\medskip \noindent 2. (\emph{Residual spectrum}) We aim at showing that $i%
\mathbb{R}\cap\sigma_r(\mathcal{A}_{\rho})=\emptyset$. Given $\beta \in 
\mathbb{R}\setminus \{0\}$, if $i\beta $ is in the residual spectrum of $%
\mathcal{A}_{\rho }$, then necessarily $i\beta $ is in the point spectrum of 
$\mathcal{A}_{\rho }^{\ast }:D(\mathcal{A}_{\rho}^{\ast })\subset \mathbf{H}%
_{\rho }\rightarrow\mathbf{H}_{\rho }$; see e.g., \cite[p.~127]{friedman}.
In this case, given the appearance and the domain of $\mathcal{A}%
_{\rho}^{\ast }$ in \eqref{A_*}, we proceed \emph{verbatim} along the lines
of Step~1. to deduce that $i\mathbb{R}\cap \sigma_r(\mathcal{A}%
_{\rho})=\emptyset$.

\medskip \noindent 3. (\emph{Continuous spectrum}) This is by far the most
challenging part of the proof. To make the inference that $i\mathbb{R}$ has
empty intersection with the continuous spectrum, it is enough to show that $i%
\mathbb{R}$ does not intersect with the \emph{approximate spectrum}; see
e.g., \cite[p.~128]{friedman}.

To this end, let $\beta \in \mathbb{R}\setminus \{0\}$ be given. If $i\beta $
is in the approximate spectrum of $\mathcal{A}_{\rho }$, then by definition
there exists a sequence 
\begin{eqnarray}
\left\{ \left[ 
\begin{array}{c}
\omega_{1,n} \\ 
\omega_{2,n} \\ 
\mu_{n}%
\end{array}
\right] \right\}_{n=1}^{\infty} &\subset & {\mathcal{D}}(\mathcal{A}_{\rho
})\; \text{such that for all $n$:} \quad \left\Vert \left[ 
\begin{array}{c}
\omega _{1,n} \\ 
\omega _{2,n} \\ 
\mu _{n}%
\end{array}
\right] \right\Vert _{\mathbf{H}_{\rho }}=1  \notag \\
&&  \notag \\
\text{and }\left[ 
\begin{array}{c}
\omega _{1,n}^{\ast } \\ 
\omega _{2,n}^{\ast } \\ 
\mu _{n}^{\ast }%
\end{array}
\right] &=&\left( i\beta -\mathcal{A}_{\rho }\right) \left[ 
\begin{array}{c}
\omega _{1,n} \\ 
\omega _{2,n} \\ 
\mu _{n}%
\end{array}
\right] \text{ satisfies }\left\Vert \left[ 
\begin{array}{c}
\omega _{1,n}^{\ast } \\ 
\omega _{2,n}^{\ast } \\ 
\mu _{n}^{\ast }%
\end{array}
\right] \right\Vert _{\mathbf{H}_{\rho }}<\frac{1}{n}\,.  \label{approx}
\end{eqnarray}
We consider therewith the relation 
\begin{equation}  \label{r2-approximate}
\left(i\beta -\mathcal{A}_{\rho }\right) \left[ 
\begin{array}{c}
\omega _{1,n} \\ 
\omega _{2,n} \\ 
\mu _{n}%
\end{array}
\right] =\left[ 
\begin{array}{c}
\omega _{1,n}^{\ast } \\ 
\omega _{2,n}^{\ast } \\ 
\mu _{n}^{\ast }%
\end{array}
\right]\,.
\end{equation}

In PDE terms, each $[\omega _{1,n},\omega _{2,n},\mu _{n}]$ satisfies the
following problem: 
\begin{subequations}
\label{e:approximate-system}
\begin{align}
& i\beta \omega _{1,n}-\omega _{2,n}=\omega _{1,n}^{\ast } & & \text{in $%
\Omega $}  \label{s1} \\
& i\beta \omega _{2,n}+P_{\rho }^{-1}\Delta ^{2}\omega _{1,n}-P_{\rho
}^{-1}p_{n}\big|_{\Omega }=\omega _{2,n}^{\ast } & & \text{ in $\Omega $}
\label{s2} \\
& \omega _{1,n}\big|_{\Omega }=\frac{\partial \omega _{1,n}}{\partial \nu }%
\Big|_{\Omega }=0 & & \text{ on $\partial \Omega $}  \label{s3} \\
& i\beta \mu _{n}-\Delta \mu _{n}+\nabla p_{n}=\mu _{n}^{\ast } & & \text{in 
$\mathcal{O}$}  \label{s4} \\
& \mathrm{div}(\mu _{n})=0 & & \text{in $\mathcal{O}$}  \label{s5} \\
& \mu _{n}=0 & & \text{on $S$}  \label{s6} \\
& \mu _{n}=(0,0,\omega _{2,n}) & & \text{on $\Omega $,}  \label{s7}
\end{align}%
where for each $n$, the associated pressure term is given by 
\end{subequations}
\begin{equation}
p_{n}=G_{1}\omega _{1,n}+G_{2}\mu _{n}\,.  \label{p_seq}
\end{equation}%
Multiplying both parts of the expression (\ref{r2-approximate}) by $[\omega
_{1,n},\omega _{2,n},\mu _{n}]$ and integrating by parts gives 
\begin{equation*}
\left\Vert \nabla \mu _{n}\right\Vert _{\mathcal{O}}^{2}=\left( \left[ 
\begin{array}{c}
\omega _{1,n}^{\ast } \\ 
\omega _{2,n}^{\ast } \\ 
\mu _{n}^{\ast }%
\end{array}%
\right] ,\left[ 
\begin{array}{c}
\omega _{1,n} \\ 
\omega _{2,n} \\ 
\mu _{n}%
\end{array}%
\right] \right) _{\mathbf{H}_{\rho }}-i\beta \left\Vert \left[ 
\begin{array}{c}
\omega _{1,n} \\ 
\omega _{2,n} \\ 
\mu _{n}%
\end{array}%
\right] \right\Vert _{\mathbf{H}_{\rho }}^{2}-2i\text{Im}(\Delta \omega
_{1,n},\Delta \omega _{2,n})_{\Omega }\,.
\end{equation*}%
We have then from (\ref{approx}) that 
\begin{equation}
\mu _{n}\longrightarrow 0\quad \text{(strongly) in $\mathbf{H}^{1}(\mathcal{O%
})$}.  \label{f_con}
\end{equation}

In turn, from the boundary condition \eqref{s7} and the Sobolev Embedding
Theorem, we have 
\begin{equation*}
\Vert \omega _{2,n}\Vert _{H^{1/2}(\Omega )}=\Vert \mu _{n}^{3}\Vert
_{H^{1/2}(\Omega )}\leq C\,\Vert \mu _{n}\Vert _{\mathbf{H}^{1}(\mathcal{O}%
)}\,,
\end{equation*}%
whence 
\begin{equation}
\omega _{2,n}\longrightarrow 0\quad \text{(strongly) in $H^{1/2}(\Omega )$}%
\,.  \label{v_con}
\end{equation}

At this point, we invoke the unique decomposition 
\begin{equation}
p_{n}=c_{n}+q_{n}\,,  \label{decomp}
\end{equation}%
where for each $n$, 
\begin{equation}
c_{n}=\text{constant}\,;\quad q_{n}\in L^{2}(\mathcal{O})/\mathbb{R}\,.
\label{part}
\end{equation}%
Then, from the known regularity for Stokes flow---see, e.g., estimate (2.46)
in \cite[p.~23]{temam}---we have from (\ref{s4})-(\ref{s6}) 
\begin{eqnarray}
\Vert q_{n}\Vert _{L^{2}(\mathcal{O})/\mathbb{R}} &\leq &C\left( \Vert
i\beta \mu _{n}\Vert _{\mathbf{L}^{2}(\mathcal{O)}}+\Vert \mu _{n}\Vert _{%
\mathbf{H}^{1/2}(\partial \mathcal{O)}}+\Vert \mu _{n}^{\ast }\Vert _{%
\mathbf{L}^{2}(\mathcal{O)}}\right)  \notag \\
&\leq &C_{\beta }\left( \Vert \mu _{n}\Vert _{\mathbf{H}^{1}(\mathcal{O)}%
}+\Vert \mu _{n}^{\ast }\Vert _{\mathbf{L}^{2}(\mathcal{O)}}\right) \,;
\label{q0}
\end{eqnarray}%
whence we obtain from (\ref{f_con}) and (\ref{approx}), 
\begin{equation}
q_{n}\longrightarrow 0\quad \text{ strongly in}\;L^{2}(\mathcal{O})\,.
\label{q1}
\end{equation}%
Moreover, since each $q_{n}$ is harmonic \textit{a fortiori}, we have
available the boundary trace estimate 
\begin{eqnarray}
\Vert q_{n}|_{\partial \mathcal{O}}\Vert _{H^{-1/2}(\partial \mathcal{O})}
&\leq &C\,\Vert q_{n}\Vert _{L^{2}(\mathcal{O)}}  \notag \\
&\leq &C_{\beta }\,\left( \Vert \mu _{n}\Vert _{\mathbf{H}^{1}(\mathcal{O)}%
}+\Vert \mu _{n}^{\ast }\Vert _{\mathcal{H}_{\mathrm{f}}}\right)  \label{q2}
\end{eqnarray}%
(see e.g., \cite[Proposition~1]{dvorak}; in attaining the second estimate we
have also used (\ref{q0})); appealing again to (\ref{f_con}) and (\ref%
{approx}) we then have 
\begin{equation}
q_{n}|_{\partial \mathcal{O}}\longrightarrow 0\quad \text{strongly in}%
\;H^{-1/2}(\mathcal{O})\,.  \label{q3}
\end{equation}

\medskip Now using the decomposition (\ref{decomp}) in the structural
equation (\ref{s2}), we have for all $n$, 
\begin{equation*}
c_{n}=-q_{n}|_{\Omega }+\Delta ^{2}\omega _{1,n}+i\beta P_{\rho }\omega
_{2,n}-P_{\rho }\omega _{2,n}^{\ast }\,,
\end{equation*}%
and so a measurement in the $H^{-2}(\Omega )$-topology gives%
\begin{equation}
\begin{split}
& c_{n}\Vert 1\Vert _{H^{-2}(\Omega )}=\big\|-q_{n}|_{\Omega }+\Delta
^{2}\omega _{1,n}+i\beta P_{\rho }\omega _{2,n}-P_{\rho }\omega _{2,n}^{\ast
}\big\|_{H^{-2}(\Omega )} \\
& \qquad \qquad \qquad \leq C_{\beta }\,\Big(\Vert q_{n}|_{\Omega }\Vert
_{H^{-1/2}(\Omega )}+\Vert \omega _{1,n}\Vert _{H_{0}^{2}(\Omega )}+\Vert
\omega _{2,n}\Vert _{L^{2}(\Omega )}+\big\|\omega _{2,n}^{\ast }\big\|%
_{D(P_{\rho }^{1/2})}\Big)\,.
\end{split}
\label{c1}
\end{equation}

Combining (\ref{approx}), (\ref{q3}) and (\ref{v_con}) with (\ref{c1}) we
achieve the conclusion that 
\begin{equation*}
\{c_{n}\}_{n\geq 1}\quad \text{is uniformly bounded in $n$.}
\end{equation*}%
Hence, there is a subsequence of constants---still denoted as $%
\{c_{n}\}_{n\geq 1}$---which satisfies for some $\tilde{c}$ 
\begin{equation}
c_{n}\rightarrow \tilde{c}\quad \text{(strongly) in $\mathbb{C}$.}
\label{c_con}
\end{equation}

We now turn our attention to the mechanical system (\ref{s2})-(\ref{s3}),
that is 
\begin{equation*}
\begin{cases}
\Delta ^{2}\omega _{1,n}=p_{n}|_{\Omega }-i\beta P_{\rho }\,\omega
_{2,n}+P_{\rho }\,\omega _{2,n}^{\ast } & \text{in $\Omega $} \\[1mm] 
\omega _{1,n}=\dfrac{\partial \omega _{1,n}}{\partial \nu }=0 & \text{on $%
\partial \Omega $.}%
\end{cases}%
\end{equation*}%
%
%
% We intend to apply this convergence to (\ref{s2})-(\ref{s3}): we have
By way of looking at this sequence of boundary value problems, let us invoke
the realization $A$ of the bilaplacian operator, defined by $A\varphi
:=\Delta ^{2}\varphi $\thinspace , $\varphi \in {\mathcal{D}}%
(A)=H^{4}(\Omega )\cap H_{0}^{2}(\Omega )$. Then we have abstractly 
\begin{equation*}
A\omega _{1,n}=c_{n}+q_{n}|_{\Omega }-i\beta P_{\rho }\,\omega
_{2,n}+P_{\rho }\,\omega _{2,n}^{\ast }\in \big[{\mathcal{D}}(A^{1/2})\big]%
^{\prime }\,,
\end{equation*}%
where ${\mathcal{D}}(A^{1/2})=H_{0}^{2}(\Omega )$.

Applying the inverse $A^{-1}\in {\mathcal{L}}(L^{2}(\Omega ),{\mathcal{D}}%
(A))$ to both sides of the above equality gives 
\begin{equation}
\omega _{1,n}=A^{-1}c_{n}+A^{-1}\big(q_{n}|_{\Omega }-i\beta P_{\rho
}\,\omega _{2,n}+P_{\rho }\omega _{2,n}^{\ast }\big)\in {\mathcal{D}}%
(A^{1/2})\,.  \label{m1}
\end{equation}%
Subsequently we can then pass to the limit in (\ref{m1}) (meanwhile using %
\eqref{c_con}, \eqref{q3}, \eqref{v_con} and \eqref{approx}) so as to have 
\begin{equation}
\tilde{\omega}=\lim_{n\rightarrow \infty }\omega _{1,n}=\lim_{n\rightarrow
\infty }A^{-1}\,c_{n}=A^{-1}\tilde{c}\,.  \label{m1.5}
\end{equation}%
Thus, this (structural) limit must satisfy 
\begin{equation}
\Delta ^{2}\tilde{\omega}=\tilde{c}\quad \text{in $\Omega $},\qquad \tilde{%
\omega}=\frac{\partial \tilde{\omega}}{\partial \nu }=0\quad \text{on $%
\partial \Omega $}.  \label{m2}
\end{equation}%
Now since $\omega _{1,n}\in H_{0}^{2}(\Omega )\cap \frac{L^{2}(\Omega )}{%
\mathbb{R}}$ for every $n$, then so is strong limit $\tilde{\omega}$. But
from (\ref{m2}) and the characterization (\ref{constant}), we have also that 
$\tilde{\omega}\in \left[ H_{0}^{2}(\Omega )\cap \frac{L^{2}(\Omega )}{%
\mathbb{R}}\right] ^{\bot }$. Thus,%
\begin{equation}
\lim_{n\rightarrow \infty }\omega _{1,n}=0.  \label{m3}
\end{equation}%
Finally, from (\ref{s1}), 
\begin{equation*}
\omega _{2,n}=i\beta \omega _{1,n}-\omega _{1,n}^{\ast };
\end{equation*}%
whence we obtain with\eqref{approx} and (\ref{m3}), 
\begin{equation}
\lim_{n\rightarrow \infty }\omega _{2,n}=0\text{ in ${\mathcal{D}}(P_{\rho
}^{1/2})$.}  \label{mech_z}
\end{equation}%
The limits in (\ref{m3}) and \eqref{mech_z}, combined with the one in %
\eqref{f_con}, now contradict the fact from \eqref{approx} that 
\begin{equation*}
\big\|\lbrack \omega _{1,n},\omega _{2,n},\mu _{n}]\big\|_{\mathbf{H}_{\rho
}}=1\qquad \forall n\,.
\end{equation*}%
Since $\beta \in \mathbb{R}\setminus \{0\}$ was arbitrary, we conclude that
the approximate spectrum of ${\mathcal{A}}_{\rho }$ does not intersect with $%
i\mathbb{R}$. 
\end{proof}

% Section -- Main result

\section{Proof of Theorem~\protect\ref{t:main} (Main result)}

%\begin{proof}
Here we will utilize Theorem~\ref{t:borichev-tomilov} (see \cite[Theorem~2.4]%
{borichev}) in the case currently being considered; namely, $\rho >0$, so
that rotational forces are accounted for in the fluid-structure PDE
dynamics. By way of using the aforesaid resolvent criterion, we consider
arbitrary data $[\omega _{1}^{\ast },\omega _{2}^{\ast },u^{\ast }]\in 
\mathbf{H}_{\rho }$, and the corresponding pre-image $[\omega _{1},\omega
_{2},u]\in {\mathcal{D}}({\mathcal{A}}_{\rho })$ which solves the following
relation for given $\beta \in \mathbb{R}$: 
\begin{equation}
(i\beta -\mathcal{A}_{\rho })\left[ 
\begin{array}{c}
\omega _{1} \\ 
\omega _{2} \\ 
\mu%
\end{array}%
\right] =\left[ 
\begin{array}{c}
\omega _{1}^{\ast } \\ 
\omega _{2}^{\ast } \\ 
\mu ^{\ast }%
\end{array}%
\right] \in \mathbf{H}_{\rho }\,.  \label{resolvent}
\end{equation}%
With respect to this relation, the proof of Theorem~\ref{t:main} will be
established if we derive the following estimate for $|\beta |$ sufficiently
large (and a positive constant $C$): 
\begin{equation}
\left\Vert \left[ 
\begin{array}{c}
\omega _{1} \\ 
\omega _{2} \\ 
\mu%
\end{array}%
\right] \right\Vert _{\mathbf{H}_{\rho }}\leq C\,|\beta |\,\left\Vert \left[ 
\begin{array}{c}
\omega _{1}^{\ast } \\ 
\omega _{2}^{\ast } \\ 
\mu ^{\ast }%
\end{array}%
\right] \right\Vert _{\mathbf{H}_{\rho }}\,;  \label{object}
\end{equation}%
this is the frequency estimate \eqref{e:asymptotic-tomilov} with $\alpha =1$.

Using the definition of ${\mathcal{A}}_{\rho}:{\mathcal{D}}({\mathcal{A}}%
_{\rho })\subset \mathbf{H}_{\rho }\rightarrow \mathbf{H}_{\rho }$, this
gives 
\begin{align*}
i\beta \omega_1-\omega_2 &=\omega_1^{\ast} \qquad\text{in $\Omega$} \\
i\beta \omega_2+P_{\rho
}^{-1}\,\Delta^2\omega_1-P_{\rho}^{-1}G_{\rho,1}\omega_1|_{\Omega }
-P_{\rho}^{-1}G_{\rho,2}\mu|_{\Omega} &=\omega_2^{\ast} \qquad \text{in $%
\Omega$} \\
i\beta \mu -\Delta \mu +\nabla G_{\rho,1}\omega_1+\nabla G_{\rho,2}\mu
&=\mu^{\ast}\qquad \text{in ${\mathcal{O}}$.}
\end{align*}
\newline
Upon a rearrangement and setting pressure variable 
\begin{equation}  \label{p-set}
\pi \equiv G_{\rho ,1}\omega _{1}+G_{\rho ,2}\mu\,,
\end{equation}
we then have 
\begin{align*}
\omega_2 &= i\beta \omega_1-\omega_1^{\ast } & \text{in $\Omega$}  \notag \\
-\beta^2\omega_1-i\beta \omega_1^{\ast} +P_{\rho
}^{-1}\Delta^2\omega_1-P_{\rho }^{-1} \pi|_{\Omega } &=\omega_2^{\ast} & 
\text{in $\Omega$}  \notag \\
i\beta \mu -\Delta \mu +\nabla \pi & =\mu^{\ast} & \text{in ${\mathcal{O}}$.}
\end{align*}
We have then following (static) fluid-structure PDE system: 
\begin{subequations} \label{e:static-beta}
\begin{align}
\omega_2 &=i\beta \omega_1-\omega_1^{\ast} & \text{in $\Omega$}  \label{r3}
\\
-\beta^2\,P_{\rho }\omega_1+\Delta^2\omega_1-\pi|_{\Omega } &=
P_{\rho}\,\omega_2^{\ast}+i\beta P_{\rho}\,\omega_1^{\ast } & \text{in $%
\Omega$}  \label{r4} \\
\omega_1|_{\partial \Omega}& =\frac{\partial\omega_1}{\partial n}\Big|%
_{\partial \Omega }=0 & \text{on $\partial\Omega$}  \label{r5} \\
i\beta \mu -\Delta \mu +\nabla \pi &=\mu ^{\ast} & \text{in ${\mathcal{O}}$}
\label{r6} \\
\mathrm{div}(\mu ) &=0 & \text{in ${\mathcal{O}}$}  \label{r7} \\
\mu &=0\  & \text{on $S$}  \notag \\
\mu &=[\mu ^{1},\mu ^{2},\mu ^{3}]=[0,0,i\beta\omega_1-\omega_1^{\ast }] & 
\text{on $\Omega$.}  \label{r8}
\end{align}

\bigskip \noindent \emph{Step 1. (An estimate for the fluid gradient)} Let
us return to the resolvent equation \eqref{resolvent}. It is easily seen
that an integration by parts gives the following static dissipation
relation: 
\end{subequations}
\begin{eqnarray*}
&&\left( (i\beta -\mathcal{A}_{\rho })\left[ 
\begin{array}{c}
\omega _{1} \\ 
\omega _{2} \\ 
\mu%
\end{array}%
\right] ,\left[ 
\begin{array}{c}
\omega _{1} \\ 
\omega _{2} \\ 
\mu%
\end{array}%
\right] \right) _{\mathbf{H}_{\rho }} \\
&=&i\beta \left\Vert \left[ 
\begin{array}{c}
\omega _{1} \\ 
\omega _{2} \\ 
\mu%
\end{array}%
\right] \right\Vert _{\mathbf{H}_{\rho }}^{2}+2i\text{Im}(\Delta \omega
_{1},\Delta \omega _{2})_{\Omega }+\left\Vert \nabla \mu \right\Vert _{%
\mathcal{O}}^{2}\,;
\end{eqnarray*}%
see (\ref{long})-(\ref{long_2}). Combining this with relation (\ref%
{resolvent}), we then have 
\begin{equation*}
\begin{split}
& i\beta \,\left\Vert \left[ 
\begin{array}{c}
\omega _{1} \\ 
\omega _{2} \\ 
\mu%
\end{array}%
\right] \right\Vert _{\mathbf{H}_{\rho }}^{2}+2i\,\text{Im}(\Delta \omega
_{1},\Delta \omega _{2})_{\Omega }+\left\Vert \nabla \mu \right\Vert _{{%
\mathcal{O}}}^{2}= \\
& \qquad \qquad \qquad =\left( (i\beta -\mathcal{A}_{\rho })\left[ 
\begin{array}{c}
\omega _{1} \\ 
\omega _{2} \\ 
\mu%
\end{array}%
\right] ,\left[ 
\begin{array}{c}
\omega _{1} \\ 
\omega _{2} \\ 
\mu%
\end{array}%
\right] \right) _{\mathbf{H}_{\rho }}=\left( \left[ 
\begin{array}{c}
\omega _{1}^{\ast } \\ 
\omega _{2}^{\ast } \\ 
\mu ^{\ast }%
\end{array}%
\right] ,\left[ 
\begin{array}{c}
\omega _{1} \\ 
\omega _{2} \\ 
\mu%
\end{array}%
\right] \right) _{\mathbf{H}_{\rho }},
\end{split}%
\end{equation*}%
whence we obtain 
\begin{equation}
\left\Vert \nabla \mu \right\Vert _{L^{2}({\mathcal{O}})}^{2}=\text{Re}%
\,\left( \left[ 
\begin{array}{c}
\omega _{1}^{\ast } \\ 
\omega _{2}^{\ast } \\ 
\mu ^{\ast }%
\end{array}%
\right] ,\left[ 
\begin{array}{c}
\omega _{1} \\ 
\omega _{2} \\ 
\mu%
\end{array}%
\right] \right) _{\mathbf{H}_{\rho }}\,.  \label{dissi}
\end{equation}

\medskip \noindent \emph{Step 2.} \emph{(Control of the $\beta $-mechanical
displacement in a lower topology)} % This comes quickly: 
Using the fluid Dirichlet boundary condition in \eqref{r8} we have 
\begin{equation*}
i\beta \omega _{1}=\mu ^{3}\big|_{\Omega }+\omega _{1}^{\ast }\,.
\end{equation*}%
We estimate this expression by invoking in sequence, the Sobolev Embedding
Theorem, Poincar\'{e}'s inequality and \eqref{dissi}. In this way, we then
obtain 
\begin{eqnarray}
\Vert \beta \omega _{1}\Vert _{H^{1/2}(\Omega )} &\leq &\big\|\mu ^{3}\big|%
_{\Omega }+\omega _{1}^{\ast }\big\|_{H^{1/2}(\Omega )}  \notag \\[1mm]
&\leq &C\,\Big(\Vert \nabla \mu \Vert _{L^{2}({\mathcal{O}})}+\Vert \omega
_{1}^{\ast }\Vert _{H_{0}^{2}(\Omega )}\Big)  \notag \\[1mm]
&\leq &C\,\left( \sqrt{\left\vert \text{Re}\left( \left[ 
\begin{array}{c}
\omega _{1}^{\ast } \\ 
\omega _{2}^{\ast } \\ 
\mu ^{\ast }%
\end{array}%
\right] ,\left[ 
\begin{array}{c}
\omega _{1} \\ 
\omega _{2} \\ 
\mu%
\end{array}%
\right] \right) _{\mathbf{H}_{\rho }}\right\vert }+\left\Vert \omega
_{1}^{\ast }\right\Vert _{H_{0}^{2}(\Omega )}\right) \,.  \label{est2.5}
\end{eqnarray}

\medskip \noindent \emph{Step 3. (Control of the mechanical displacement)}
We multiply both sides of the mechanical equation in \eqref{r4} by $\omega_1$
and integrate. This gives the relation 
\begin{equation}  \label{mech}
\big(\Delta^2\omega_1,\omega_1\big)_{L^2(\Omega)}= \beta^2\big\|%
P_{\rho}^{1/2}\omega_1\big\|_{L^2(\Omega)}^2 +(\pi|_{\Omega
},\omega_1)_{\Omega} +(P_{\rho}\omega_2^*+i\beta P_{\rho
}\omega_1^*,\omega_1)_{L^2(\Omega)}\,.
\end{equation}

\medskip \noindent \emph{(3.i)} To handle the first term on the right hand
side of \eqref{mech}, we invoke Poincar\'{e}'s Inequality, thereby obtaining 
\begin{equation}
\beta ^{2}\big\|P_{\rho }^{1/2}\omega _{1}\big\|_{L^{2}(\Omega )}^{2}=\beta
^{2}\Big(\Vert \omega _{1}\Vert _{L^{2}(\Omega )}^{2}+\rho \big\|\nabla
\omega _{1}\big\|_{L^{2}(\Omega )}^{2}\Big)\leq C_{\rho }\beta ^{2}\big\|%
\nabla \omega _{1}\big\|_{L^{2}(\Omega )}^{2}\,.  \label{inter}
\end{equation}%
Now, 
\begin{equation*}
\begin{split}
\beta ^{2}\big\|\nabla \omega _{1}\big\|_{L^{2}(\Omega )}^{2}& =\beta \big(%
\nabla \omega _{1},\beta \nabla \omega _{1}\big)_{L^{2}(\Omega )}=\beta \big(%
\nabla \omega _{1},\beta \nabla \omega _{1}\big)_{H^{1/2}(\Omega
),H^{-1/2}(\Omega )} \\[1mm]
& \leq C\,|\beta |\,\Vert \omega _{1}\Vert _{H^{3/2}(\Omega )}\,\Vert \beta
\omega _{1}\Vert _{H^{1/2}(\Omega )}\,.
\end{split}%
\end{equation*}%
Subsequently, interpolating between $H^{2}(\Omega )$ and $H^{1/2}(\Omega )$
with interpolation parameter $\theta =\frac{1}{3}$ (see e.g., \cite{L-M}),
we obtain 
\begin{equation*}
\begin{split}
\beta ^{2}\big\|\nabla \omega _{1}\big\|_{L^{2}(\Omega )}^{2}& \leq
C\,|\beta |^{2/3}\,\Vert \beta |^{1/3}|\omega _{1}\Vert _{H^{3/2}(\Omega
)}\Vert \beta \omega _{1}\Vert _{H^{1/2}(\Omega )} \\[1mm]
& \leq C\,|\beta |^{2/3}\Big[\Vert \omega _{1}\Vert _{H^{2}(\Omega
)}^{2/3}\,\Vert \beta \omega _{1}\Vert _{H^{1/2}(\Omega )}^{1/3}\Big]\,\Vert
\beta \omega _{1}\Vert _{H^{1/2}(\Omega )}\,.
\end{split}%
\end{equation*}%
Via Young's inequality, with conjugate exponents $3$ and $3/2$, we then have 
\begin{equation*}
\beta ^{2}\big\|\nabla \omega _{1}\big\|_{L^{2}(\Omega )}^{2}\leq C\,\Vert
\omega _{1}\Vert _{H^{2}(\Omega )}^{2/3}\,|\beta |^{2/3}\,\Vert \beta \omega
_{1}\Vert _{H^{1/2}(\Omega )}^{4/3}\leq \epsilon \Vert \omega _{1}\Vert
_{H^{2}(\Omega )}^{2}+C_{\epsilon }|\beta |\,\Vert \beta \omega _{1}\Vert
_{H^{1/2}(\Omega )}^{2}\,;
\end{equation*}%
subsequently reinvoking the estimate \eqref{est2.5}, we so have for $|\beta
|>1$, 
\begin{eqnarray}
\beta ^{2}\big\|\nabla \omega _{1}\big\|_{L^{2}(\Omega )}^{2} &\leq
&\epsilon \Vert \omega _{1}\Vert _{H^{2}(\Omega )}^{2}+C_{\epsilon }\,|\beta
|\left( \left\Vert \text{Re}\left( \left[ 
\begin{array}{c}
\omega _{1}^{\ast } \\ 
\omega _{2}^{\ast } \\ 
\mu ^{\ast }%
\end{array}%
\right] ,\left[ 
\begin{array}{c}
\omega _{1} \\ 
\omega _{2} \\ 
\mu%
\end{array}%
\right] \right) _{\mathbf{H}_{\rho }}\right\Vert +\Vert \omega _{1}^{\ast
}\Vert _{H_{0}^{2}(\Omega )}^{2}\right)  \notag \\
&\leq &2\,\epsilon \left\Vert \left[ 
\begin{array}{c}
\omega _{1} \\ 
\omega _{2} \\ 
\mu%
\end{array}%
\right] \right\Vert _{\mathbf{H}_{\rho }}^{2}+C_{\epsilon }|\beta
|^{2}\left\Vert \left[ 
\begin{array}{c}
\omega _{1}^{\ast } \\ 
\omega _{2}^{\ast } \\ 
\mu ^{\ast }%
\end{array}%
\right] \right\Vert _{\mathbf{H}_{\rho }}^{2}\,.  \label{est4}
\end{eqnarray}%
Applying the obtained estimate \eqref{est4} to the right hand side of %
\eqref{inter} yields now%
\begin{equation}
\beta ^{2}\big\|P_{\rho }^{1/2}\omega _{1}\big\|_{L^{2}(\Omega )}^{2}\leq
C_{\rho }\epsilon \,\left\Vert \left[ 
\begin{array}{c}
\omega _{1} \\ 
\omega _{2} \\ 
\mu%
\end{array}%
\right] \right\Vert _{\mathbf{H}_{\rho }}^{2}+C_{\epsilon }\,|\beta
|^{2}\left\Vert \left[ 
\begin{array}{c}
\omega _{1}^{\ast } \\ 
\omega _{2}^{\ast } \\ 
\mu ^{\ast }%
\end{array}%
\right] \right\Vert _{\mathbf{H}_{\rho }}^{2}\,.  \label{est4.5}
\end{equation}

\bigskip \noindent \emph{(3.ii)} To handle the second term on the right hand
side of \eqref{mech}, we observe that since $[\omega_1,\omega_2,\mu]\in 
\mathbf{H}_{\rho}$, then in particular 
\begin{equation*}
\int_{\Omega }\omega_1\, dx = 0\,.
\end{equation*}
In consequence, one has wellposedness of the following boundary value
problem (see \cite[Proposition~2.2]{temam}): 
\begin{equation}  \label{BVP}
\begin{cases}
-\Delta \psi +\nabla q=0 & \text{in ${\mathcal{O}}$} \\[1mm] 
\text{div}\,(\psi )=0 & \text{in ${\mathcal{O}}$} \\[1mm] 
\psi|_S=0 & \text{on $S$} \\[1mm] 
\psi|_\Omega =\big(\psi^1,\psi^2,\psi^3\big)\big|_{\Omega }=(0,0,\omega_1) & 
\text{on $\Omega$}\,,%
\end{cases}%
\end{equation}
with the estimate 
\begin{equation}  \label{bvp_est}
\big\|\nabla \psi \big\|_{\mathbf{L}^2({\mathcal{O}})}+\|q\|_{L^2({\mathcal{O%
}})}\le C\,\|\omega_1\|_{H^{1/2}(\Omega)}
\end{equation}
(implicitly, we are also using Poincar\'{e} inequality).

\medskip With this solution variable $\psi $ of \eqref{BVP} in hand, we now
address the second term on the right hand side of \eqref{mech}. Since the
normal vector $\nu $ equals $(0,0,1)$ on $\Omega $ (and as the fluid
variable $\mu $ is divergence free), we have

\begin{equation}
\begin{split}
(\pi |_{\Omega },\omega _{1})_{\Omega }& =-\left( \frac{\partial \mu }{%
\partial \nu },\left[ 
\begin{array}{c}
0 \\ 
0 \\ 
\omega _{1}%
\end{array}%
\right] \right) _{\mathbf{L}^{2}(\Omega )}+\left( \left. \pi \right\vert
_{\Omega }\nu ,\left[ 
\begin{array}{c}
0 \\ 
0 \\ 
\omega _{1}%
\end{array}%
\right] \right) _{\mathbf{L}^{2}(\Omega )} \\[1mm]
& =-\left( \frac{\partial \mu }{\partial \nu },\psi \right) _{\mathbf{L}%
^{2}(\partial {\mathcal{O}})}+\big(\left. \pi \right\vert _{\Omega }\nu
,\psi \big)_{\mathbf{L}^{2}(\partial {\mathcal{O}})}\,,
\end{split}
\label{p1}
\end{equation}%
after invoking the boundary conditions in \eqref{BVP}.

The use of Green's Identities and the Stokes system in \eqref{BVP} then
gives 
\begin{equation*}
\begin{split}
(\pi |_{\Omega },\omega _{1})_{\Omega }& =-\Big(\frac{\partial \mu }{%
\partial \nu },\psi \Big)_{\mathbf{L}^{2}(\partial {\mathcal{O}})}+\big(%
\left. \pi \right\vert _{\Omega }\nu ,\psi \big)_{\mathbf{L}^{2}(\partial {%
\mathcal{O}})} \\[1mm]
& =-\big(\Delta \mu ,\psi \big)_{\mathbf{L}^{2}({\mathcal{O}})}-\big(\nabla
\mu ,\nabla \psi \big)_{\mathbf{L}^{2}({\mathcal{O}})}+\big(\nabla \pi ,\psi %
\big)_{\mathbf{L}^{2}({\mathcal{O}})} \\[1mm]
& =-i\beta \,(u,\psi )_{\mathbf{L}^{2}({\mathcal{O}})}-\big(\nabla u,\nabla
\psi \big)_{\mathbf{L}^{2}({\mathcal{O}})}+\big(u^{\ast },\psi \big)_{%
\mathbf{L}^{2}({\mathcal{O}})}\,.
\end{split}%
\end{equation*}%
Estimating this right hand side by means of Poincar\'{e} Inequality, we then
have for $|\beta |>1$, 
\begin{equation}
\big|(\pi |_{\Omega },w_{1})_{\Omega }\big|\leq C\,|\beta |\,\Vert \nabla
\psi \Vert _{\mathbf{L}^{2}({\mathcal{O}})}\big(\Vert \nabla u\Vert _{%
\mathbf{L}^{2}({\mathcal{O}})}+\Vert u^{\ast }\Vert _{\mathbf{L}^{2}({%
\mathcal{O}})}\big)\,;  \label{k1}
\end{equation}%
and subsequently refining this inequality by means of \eqref{dissi}, %
\eqref{bvp_est} and \eqref{est2.5}, we establish 
\begin{equation}
\begin{split}
\big|(\pi |_{\Omega },w_{1})_{\Omega }\big|& \leq C\,|\beta |\,\Vert \omega
_{1}\Vert _{H^{1/2}(\Omega )}\,\left( \left\vert \text{Re}\left( \left[ 
\begin{array}{c}
\omega _{1}^{\ast } \\ 
\omega _{2}^{\ast } \\ 
\mu ^{\ast }%
\end{array}%
\right] ,\left[ 
\begin{array}{c}
\omega _{1} \\ 
\omega _{2} \\ 
\mu%
\end{array}%
\right] \right) _{\mathbf{H}_{\rho }}\right\vert ^{1/2}+\Vert u^{\ast }\Vert
_{\mathbf{L}^{2}({\mathcal{O}})}\right) \\[1mm]
& \leq C\,\left( \left\vert \text{Re}\left( \left[ 
\begin{array}{c}
\omega _{1}^{\ast } \\ 
\omega _{2}^{\ast } \\ 
\mu ^{\ast }%
\end{array}%
\right] ,\left[ 
\begin{array}{c}
\omega _{1} \\ 
\omega _{2} \\ 
\mu%
\end{array}%
\right] \right) _{\mathbf{H}_{\rho }}\right\vert ^{1/2}+\Vert \omega
_{1}^{\ast }\Vert _{H_{0}^{2}(\Omega )}\right) \cdot \\[1mm]
& \qquad \qquad \qquad \cdot \,C\,\left( \left\vert \text{Re}\left( \left[ 
\begin{array}{c}
\omega _{1}^{\ast } \\ 
\omega _{2}^{\ast } \\ 
\mu ^{\ast }%
\end{array}%
\right] ,\left[ 
\begin{array}{c}
\omega _{1} \\ 
\omega _{2} \\ 
\mu%
\end{array}%
\right] \right) _{\mathbf{H}_{\rho }}\right\vert ^{1/2}+\Vert u^{\ast }\Vert
_{\mathbf{L}^{2}({\mathcal{O}})}\right) \\[1mm]
& \leq \epsilon \left\Vert \left[ 
\begin{array}{c}
\omega _{1} \\ 
\omega _{2} \\ 
\mu%
\end{array}%
\right] \right\Vert _{\mathbf{H}_{\rho }}^{2}+C_{\epsilon }\,\left\Vert %
\left[ 
\begin{array}{c}
\omega _{1}^{\ast } \\ 
\omega _{2}^{\ast } \\ 
\mu ^{\ast }%
\end{array}%
\right] \right\Vert _{\mathbf{H}_{\rho }}^{2}\,,
\end{split}
\label{est5}
\end{equation}%
after again using Young's Inequality.

\medskip \noindent \emph{(3.iii)} It remains to handle the third term on the
right hand side of \eqref{mech}. By way of estimate \eqref{est2.5} we have
readily for $|\beta |>1$ 
\begin{equation}
\begin{split}
\big|\big(P_{\rho }\omega _{2}^{\ast }+i\beta P_{\rho }\omega _{1}^{\ast
},\omega _{1}\big)_{L^{2}(\Omega )}\big|& =\big|\big(\omega _{2}^{\ast
}+i\beta \omega _{1}^{\ast },\omega _{1}\big)_{L^{2}(\Omega )}+\rho \big(%
\nabla \lbrack \omega _{2}^{\ast }+i\beta \omega _{1}^{\ast }],\nabla \omega
_{1}\big)_{L^{2}(\Omega )}\big| \\[1mm]
& \leq C_{\rho }\,|\beta |\,\big\|\nabla \omega _{1}\big\|_{L^{2}(\Omega )}%
\big(\big\|\nabla \omega _{1}^{\ast }\big\|_{L^{2}(\Omega )}+\big\|\nabla
\omega _{2}^{\ast }\big\|_{L^{2}(\Omega )}\big) \\
& \leq 2\,\epsilon ^{2}\left\Vert \left[ 
\begin{array}{c}
\omega _{1} \\ 
\omega _{2} \\ 
\mu%
\end{array}%
\right] \right\Vert _{\mathbf{H}_{\rho }}^{2}+C_{\epsilon }|\beta
|^{2}\left\Vert \left[ 
\begin{array}{c}
\omega _{1}\ast \\ 
\omega _{2}^{\ast } \\ 
\mu ^{\ast }%
\end{array}%
\right] \right\Vert _{\mathbf{H}_{\rho }}^{2}\,,
\end{split}
\label{est6}
\end{equation}%
after reusing $|ab|\leq \epsilon a^{2}+C_{\epsilon }b^{2}$.

Applying \eqref{est4.5}, \eqref{est5}, and \eqref{est6} to the right hand
side of \eqref{mech}, and using the fact that $\omega _{1}$ satisfies hinged
boundary conditions, we then have 
\begin{equation}
\begin{split}
\Vert \Delta \omega _{1}\Vert _{L^{2}(\Omega )}^{2}& =\big(\Delta ^{2}\omega
_{1},\omega _{1}\big)_{L^{2}(\Omega )} \\
& \leq \epsilon \,\big(C_{\rho }+1+2\epsilon \big)\left\Vert \left[ 
\begin{array}{c}
w_{1} \\ 
w_{2} \\ 
u%
\end{array}%
\right] \right\Vert _{\mathbf{H}_{\rho }}^{2}+C_{\epsilon }\left\vert \beta
\right\vert ^{2}\left\Vert \left[ 
\begin{array}{c}
w_{1}^{\ast } \\ 
w_{2}^{\ast } \\ 
u^{\ast }%
\end{array}%
\right] \right\Vert _{\mathbf{H}_{\rho }}^{2}\,.
\end{split}
\label{pent}
\end{equation}

\medskip \noindent \emph{Step 4. (Control of the mechanical velocity)} Via
the resolvent relation $\omega_2=i\beta \omega_1-\omega_1^*$ we have 
\begin{equation*}
\|\omega_2\|_{H^1(\Omega )} \le \|\beta
\omega_1\|_{H^1(\Omega)}+\|\omega_1^*\|_{H^1(\Omega)} \le C\,\|\beta \nabla
\omega_1\|_{L^2(\Omega)}+\|\omega_1^*\|_{H^1(\Omega)}\,,
\end{equation*}
after again using Poincar\'e Inequality. Applying \eqref{est4} once more, we
attain 
\begin{equation}  \label{est8}
\|\omega_2\|_{H^1(\Omega )}^2\le \epsilon\,C\left\| \left[ 
\begin{array}{c}
\omega_1 \\ 
\omega_2 \\ 
\mu%
\end{array}
\right] \right\|_{\mathbf{H}_{\rho }}^2+C_{\epsilon }\,|\beta|^2\left\| %
\left[ 
\begin{array}{c}
\omega_1^* \\ 
\omega_2^* \\ 
\mu^*%
\end{array}
\right] \right\|_{\mathbf{H}_{\rho }}^2\,.
\end{equation}

\smallskip To finish the proof of Theorem~\ref{t:main}, we collect %
\eqref{dissi}, \eqref{pent} and \eqref{est8}. This gives the following
conclusion: the solution of the resolvent equation satisfies, for $|\beta|
>1 $, the estimate 
\begin{equation*}
\left\|\left[ 
\begin{array}{c}
\omega_1 \\ 
\omega_2 \\ 
\mu%
\end{array}
\right] \right\|_{\mathbf{H}_{\rho }}^2\le \epsilon\,C\left\|\left[ 
\begin{array}{c}
\omega_1 \\ 
\omega_2 \\ 
\mu%
\end{array}
\right] \right\|_{\mathbf{H}_{\rho }}^2+C_{\epsilon }\,|\beta|^2\left\|\left[
\begin{array}{c}
\omega_1^* \\ 
\omega_2^* \\ 
\mu^*%
\end{array}
\right] \right\|_{\mathbf{H}_{\rho}}^2\,,
\end{equation*}
which gives the estimate \eqref{object}, for $\epsilon >0$ small enough.
This concludes the proof of Theorem~\ref{t:main}. \qed
%\end{proof}

% THE END

\end{document}